\newcommand{\R}{{\mathbb R}}
\newcommand{\Sp}{{\mathbb S}}
\newcommand{\ds}{\displaystyle}
\newcommand{\no}{\nonumber}
\newcommand{\be}{\begin{eqnarray}}
\newcommand{\ben}{\begin{eqnarray*}}
\newcommand{\en}{\end{eqnarray}}
\newcommand{\enn}{\end{eqnarray*}}
\newcommand{\ba}{\backslash}
\newcommand{\pa}{\partial}
\newcommand{\ov}{\overline}
\newcommand{\I}{{\rm Im}}
\newcommand{\Rt}{{\rm Re}}
\newcommand{\G}{\Gamma}
\newcommand{\wid}{\widetilde}
\newcommand{\ra}{\rightarrow}
\newcommand{\se}{\setminus}
\newtheorem{remark}[theorem]{Remark}
\begin{document}
\renewcommand{\theequation}{\arabic{section}.\arabic{equation}}
%
\title{\bf Uniqueness in inverse scattering problems with phaseless far-field data at a fixed frequency. II
}
\author{Xiaoxu Xu\thanks{Academy of Mathematics and Systems Science, Chinese Academy of Sciences,
Beijing 100190, China and School of Mathematical Sciences, University of Chinese
Academy of Sciences, Beijing 100049, China ({\tt xuxiaoxu14@mails.ucas.ac.cn})}
\and Bo Zhang\thanks{LSEC and Academy of Mathematics and Systems Science, Chinese Academy of
Sciences, Beijing, 100190, China and School of Mathematical Sciences, University of Chinese
Academy of Sciences, Beijing 100049, China ({\tt b.zhang@amt.ac.cn})}
\and Haiwen Zhang\thanks{Academy of Mathematics and Systems Science, Chinese Academy of Sciences,
Beijing 100190, China ({\tt zhanghaiwen@amss.ac.cn})}}
\date{}

\maketitle

\begin{abstract}
This paper is concerned with uniqueness in inverse acoustic scattering with phaseless far-field data
at a fixed frequency. In our previous work ({\em SIAM J. Appl. Math. \bf78} (2018), 1737-1753), 
by utilizing spectral properties of the far-field operator we proved for the first time that the obstacle 
and the index of refraction of an inhomogeneous
medium can be uniquely determined by the phaseless far-field patterns generated by infinitely many sets
of superpositions of two plane waves with different directions at a fixed frequency under the a priori
assumption that the obstacle is known to be a sound-soft or non-absorbing impedance obstacle and the
index of refraction $n$ of the inhomogeneous medium is real-valued and satisfies that either $n-1\ge c_1$
or $n-1\le-c_1$ in the support of $n-1$ for some positive constant $c_1$.
In this paper, we remove the a priori assumption on the obstacle and the index of refraction of the
inhomogeneous medium by adding a reference ball to the scattering system together with a simpler method
of using Rellich's lemma and Green's representation formula for the scattering solutions.
Further, our new method is also used to prove uniqueness in determining a locally
rough surface from the phaseless far-field patterns corresponding to infinitely many sets of superpositions
of two plane waves with different directions as the incident fields at a fixed frequency.
\end{abstract}

\begin{keywords}
Uniqueness, inverse scattering, phaseless far-field pattern, Dirichlet boundary conditions,
impedance boundary conditions, inhomogeneous medium, locally rough surfaces
\end{keywords}

\begin{AMS}
78A46, 35P25
\end{AMS}

\pagestyle{myheadings}
\thispagestyle{plain}
\markboth{X. Xu, B. Zhang, and H. Zhang}{Uniqueness in inverse scattering with phaseless far-field data}

\section{Introduction}\label{sec1}

Inverse scattering theory has wide applications in such fields as radar, sonar, geophysics, medical imaging,
and nondestructive testing (see, e.g., \cite{CK,CK11,KG}).
This paper is concerned with inverse scattering with phaseless far-field data generated by incident plane waves.
We will consider three different scattering models: the scattering problems by a bounded obstacle,
an inhomogeneous medium, and a locally perturbed infinite plane surface
(called a locally rough surface in this paper).

Inverse scattering problems with full data have been widely studied both mathematically
and numerically over the past decades (see, e.g., \cite{CK,CK11,KG} and the references quoted there).
However, in many applications, the phase of the near-field or far-field pattern can not be measured
accurately compared with its modulus or intensity. Therefore, it is often desirable to recover the obstacle
and the medium from the modulus or intensity of the near-field or the far-field pattern
(or the phaseless near-field data or the phaseless far-field data).
Inverse scattering problems with phaseless near-field data, also called the (near-field)
{\em phase retrieval problems} in optics and other physical and engineering sciences, have been widely
studied numerically for the past decades (see, e.g. \cite{BaoLiLv2012,BaoZhang16,CH16,CH17,CFH17,K14,KR16,KR17}).
Recently, several results are available for the mathematical study (such as uniqueness and stability) of
the near-field phase retrieval problem (see, e.g., \cite{K14,K17,KR17,MH17,N15,N16}).

In contrast to the case with phaseless near-field data, inverse scattering with
phaseless far-field data is more challenging due to the {\em translation invariance property}
of the phaseless far-field pattern (see \cite{KR,LS04,XZZ,ZZ01}).
Thus, it is impossible to reconstruct the location of the scatterers from the phaseless far-field pattern
with one plane wave as the incident field. Nevertheless, several reconstruction algorithms have been
developed to reconstruct the shape of the scatterer from the phaseless far-field data with one plane wave
as the incident field (see \cite{ACZ16,I2007,IK2010,IK2011,KR,L16,LiLiu15,LLW17,S16}).
Further, by assuming a priori the scatterer to be a sound-soft ball centered at the origin, uniqueness
was established in determining the radius of the ball from a single phaseless far-field datum in \cite{LZ10}.
By studying the high frequency asymptotics of the far-field pattern, it was proved in \cite{majda76}
that the shape of a general smooth convex sound-soft obstacle can be recovered from
the modulus of the far-field pattern associated with one plane wave as the incident field.
However, for plane wave incidence no uniqueness results are available for the inverse problem of
recovering the scatterers from phaseless far-field data generated by one incident plane wave.

Recently, it was suggested in \cite{ZZ01} to use superpositions of two plane waves rather than one plane wave
as the incident fields with an interval of frequencies to break the translation invariance property of the
phaseless far-field pattern. A recursive Newton iteration algorithm in frequencies was further developed
in \cite{ZZ01} to recover both the location and the shape of the obstacle simultaneously
from multi-frequency phaseless far-field data. The idea was also extended to inverse scattering by
locally rough surfaces with phaseless far-field data in \cite{ZZ02}.
On the other hand, a fast imaging algorithm was developed in \cite{ZZ03} to numerically recover the
scattering obstacles from phaseless far-field data at a fixed frequency.
Furthermore, it was rigorously proved in \cite{XZZ} that the obstacle and the index of refraction of
an inhomogeneous medium can be uniquely determined by the phaseless far-field patterns generated by
infinitely many sets of superpositions of two plane waves at a fixed frequency
under {\em the a priori assumption} that the obstacle is a sound-soft obstacle or an impedance obstacle with
a real-valued impedance function and the refractive index $n$ of the inhomogeneous medium is real-valued
and satisfies the condition that either $n-1\ge c_1$ or $n-1\le-c_1$ in the support
of $n-1$ for some positive constant $c_1$.
The purpose of the present paper is to remove the a priori assumption on the obstacle and the refractive
index of the inhomogeneous medium by adding a known reference ball to the scattering system
in conjunction with a simple technique based on Rellich's lemma and Green's representation formula for
the scattering solutions.

It should be remarked that the idea of adding a reference ball to the scattering system was first used
in \cite{LLZ09} to improve the linear sampling method numerically. It was recently used in \cite{ZG18}
to prove uniqueness results for inverse scattering with phaseless far-field data.
Precisely, it was proved in \cite{ZG18} that an obstacle $D$ and its boundary condition
can be uniquely determined by the phaseless far-field data
\ben
&&\{|u^\infty_{D\cup B}(\hat{x},d_0)|:\hat{x}\in\Sp^2\},\;\;
\{|v^\infty_{D\cup B}(\hat{x},z)|:\hat{x}\in\Sp^2,z\in\Pi\},\\
&&\{|u^\infty_{D\cup B}(\hat{x},d_0)+v^\infty_{D\cup B}(\hat{x},z)|:\hat{x}\in\Sp^2,z\in\Pi\}
\enn
for a fixed wave number $k>0$ and a fixed $d_0\in\Sp^2$, where $\Sp^2:=\{d\in\R^3\,:\,|d|=1\}$
denotes the unit sphere in $\R^3$, $B$ is a reference sound-soft ball,
$u^\infty_{D\cup B}(\hat{x},d_0)$ and $u^\infty_{D\cup B}(\hat{x},z)$
denote the far-field pattern of the scattering solution of the Helmholtz equation $\Delta u+k^2u=0$
associated with the obstacle $D\cup B$ corresponding to the incident plane wave $e^{ikx\cdot d_0}$
and the incident point source ${e^{ik|x-z|}}/(4\pi|x-z|)$ ($x\not=z$), respectively,
and $\Pi$ is the boundary of a simply-connected convex polyhedron $P$ such that
$P\subset\R^3\se\ov{D\cup B}$ and $k^2$ is not a Dirichlet eigenvalue of $-\Delta$ in $P$.
A similar uniqueness result was also proved in \cite{ZG18} for the inhomogeneous medium case
by adding a reference homogeneous medium ball.

The remaining part of the paper is organized as follows. In section \ref{sec2}, we introduce
the direct and inverse scattering problems considered in this paper.
Sections \ref{o} and \ref{m} are devoted to the inverse obstacle and medium scattering problems,
respectively, while Section \ref{lrs} is devoted the inverse rough surface scattering problem.
Conclusions are given in Section \ref{con}.

\section{The direct scattering problems}\label{sec2}

To give a precise description of the scattering problems, we assume that $D$ is an open and bounded domain
in $\R^3$ with $C^2$-boundary $\G:=\pa D$ satisfying that the exterior $\R^3\se\ov{D}$ of $\ov{D}$ is connected.
Note that $D$ may not be connected and thus may consist of several (finitely many) connected components.
Suppose the time-harmonic ($e^{-i\omega t}$ time dependence) plane wave
\ben
u^i=u^i(x,d):=e^{ikx\cdot d}
\enn
is incident on the obstacle $D$ from the unbounded part $\R^3\se\ov{D}$.
Here, $d\in\Sp^2$ is the incident direction,
$k=\omega/c>0$ is the wave number, $\omega$ and $c$ are the wave frequency and speed in the homogeneous
medium in $\R^3\ba\ov{D}$. Let $u^s$ be the scattered field. Then the total field $u:=u^i+u^s$ outside
an impenetrable obstacle $D$ satisfies the exterior boundary value problem:
\begin{subequations}
\be\label{he}
\Delta u+k^2u &=& 0\quad{\rm in}\;\;\R^3\se\ov{D},\\ \label{bc}
{\mathscr B}u &=& 0\quad{\rm on}\;\;\pa D,\\ \label{rc}
\lim\limits_{r\rightarrow\infty}r\left(\frac{\pa u^s}{\pa r}-iku^s\right)&=&0,\quad r=|x|,
\en
\end{subequations}
where (\ref{he}) is the Helmholtz equation and (\ref{rc}) is the Sommerfeld radiation condition.
The boundary condition ${\mathscr B}$ in (\ref{bc}) depends on the physical property of the obstacle $D$:
\ben
\left\{\begin{array}{ll}
{\mathscr B}u=u  & \text{if $D$ is a sound-soft obstacle},\\
\ds {\mathscr B}u=\frac{\pa u}{\pa\nu}+\eta u  & \text{if $D$ is an impedance obstacle},\\
\ds {\mathscr B}u=u\;\;\text{on}\;\;\G_D,\;\;{\mathscr B}u=\frac{\pa u}{\pa\nu}+\eta u\;\;\text{on}\;\;\G_I &
\text{if $D$ is a partially coated obstacle},
\end{array}\right.
\enn
where $\nu$ is the unit outward normal to the boundary $\pa D$ or $\G_I$ and $\eta$ is
the impedance function satisfying that $\I[\eta(x)]\geq0$ for all $x\in\pa D$ or $\G_I$.
In this paper, we assume that $\eta\in C(\pa D)$ or $C(\G_I)$, that is, $\eta$ is continuous on $\pa D$ or $\G_I$.
When $\eta=0$, the impedance boundary condition becomes the Neumann boundary condition (a sound-hard obstacle).
For a partially coated obstacle $D$, we assume that the boundary $\G:=\pa D$ has a Lipschitz dissection
$\G=\G_D\cup\Pi\cup\G_I$, where $\G_D$ and $\G_I$ are disjoint, relatively open subsets of $\G$,
having $\Pi$ as their common boundary in $\G$ (see, e.g., \cite{M}). Furthermore, boundary conditions of
Dirichlet and impedance type are specified on $\G_D$ and $\G_I$, respectively.

The problem of scattering of a plane wave by an inhomogeneous medium is modelled
by the medium scattering problem:
\begin{subequations}
\be\label{he-n}
\Delta u+k^2n(x)u&=&0\quad\mbox{in}\;\;\R^3,\\ \label{rc-n}
\lim\limits_{r\rightarrow\infty}r\left(\frac{\pa u^s}{\pa r}-iku^s\right)&=&0,\quad r=|x|,
\en
\end{subequations}
where $u=u^i+u^s$ is the total field, $u^s$ is the scattered field, and $n$ in the reduced wave
equation (\ref{he-n}) is the index of refraction characterizing the inhomogeneous medium.
In this paper, we assume that $m:=n-1$ has the compact support $\ov{D}$ and $n\in L^\infty(D)$ with
$\Rt[n(x)]>0$ and $\I[n(x)]\geq0$ for almost all $x\in D$.

Well-posedness of the problems (\ref{he})-(\ref{rc}) and (\ref{he-n})-(\ref{rc-n}) has been established
in \cite{CK,KG,Kirsch} by using either the integral equation method or the variational method
(see Theorem 3.11 in \cite{CK} and Theorem 1.1 in \cite{KG} for the exterior Dirichlet problem,
Theorem 2.2 in \cite{KG} for the exterior impedance problem, Theorem 8.5 in \cite{CC14} 
for the partially coated problem, and Theorem 6.9 in \cite{Kirsch} for the medium scattering 
problem (\ref{he-n})-(\ref{rc-n})).
In particular, it is well-known that the scattered field $u^s$ has the asymptotic behavior:
\ben
u^s(x,d)=\frac{e^{ik|x|}}{|x|}\left\{u^\infty(\hat{x},d)+\left(\frac{1}{|x|}\right)\right\},
\quad|x|\rightarrow\infty
\enn
uniformly for all observation directions $\hat{x}=x/|x|\in\Sp^2$, where $u^\infty(\hat{x},d)$ is the
far field pattern of $u^s$ which is an analytic function of $\hat{x}\in\Sp^2$ for each $d\in\Sp^2$
and of $d\in\Sp^2$ for each $\hat{x}\in\Sp^2$ (see, e.g., \cite{CK}).

The last model we consider is the scattering of acoustic plane waves by a locally perturbed plane surface.
Precisely, let $h\in C^2(\R^2)$ with a compact support in $\R^2$.
Denote by $\G:=\{x=(x_1,x_2,x_3):x_3=h(x_1,x_2),(x_1,x_2)\in\R^2\}$
the locally rough surface and by $D_+:=\{x=(x_1,x_2,x_3):x_3>h(x_1,x_2),(x_1,x_2)\in\R^2\}$
the half-plane above the locally rough surface $\G$ which is filled with a homogeneous medium.
Let $k=\omega/c>0$ be the wave number in $D_+$ with $\omega$ and $c$ being the wave frequency and speed,
respectively. Assume that the time-harmonic ($e^{-i\omega t}$ time dependence) plane wave
\[u^i=u^i(x,d)=e^{ikx\cdot d}\]
is incident on the locally rough surface $\G$ from $D_+$, where $d\in\Sp^2_-$ is the incident direction
and $\Sp^2_-:=\{x=(x_1,x_2,x_3)\in\R^3:|x|=1,x_3<0\}$ is the lower part of the unit sphere $\Sp^2$. 
Then the total field $u$ satisfies the Helmholtz equation
\be\label{he-l}
\Delta u+k^2u=0\;\;\text{in}\;\;D_+.
\en
Here, the total field $u=u^i+u^r+u^s$ satisfies the Dirichlet boundary condition
\be\label{Dbcl}
u=u^i+u^r+u^s=0\;\;\text{on}\;\;\G
\en
for a sound-soft surface or the Neumann boundary condition
\be\label{Nbcl}
\frac{\pa u}{\pa\nu}=\frac{\pa u^i}{\pa\nu}+\frac{\pa u^r}{\pa\nu}+\frac{\pa u^s}{\pa\nu}=0\;\;\text{on}\;\;\G
\en
for a sound-hard surface, where $\nu$ is the unit outward normal vector on $\G$ directed into $D_+$,
$u^r$ is the reflected wave by the infinite plane $x_3=0$:
\[u^r(x,d):=\begin{cases}
-e^{ikx\cdot d'} & \text{for the Dirichlet boundary condition on}\;\;\G,\\
e^{ikx\cdot d'} & \text{for the Neumann boundary condition on}\;\;\G
\end{cases}\]
with $d'=(d_1,d_2,-d_3)\in\Sp^2$ and $u^s$ is the unknown scattered field to be determined.
The scattered field $u^s$ is required to satisfy the Sommerfeld radiation condition
\be\label{rcl}
\lim_{r\ra\infty}r\left(\frac{\pa u^s}{\pa\nu}-iku^s\right)=0,\;\;r=|x|,\;\;x\in D_+.
\en

The problem (\ref{he-l})-(\ref{Dbcl}) and (\ref{rcl}) and the problem (\ref{he-l}) and (\ref{Nbcl})-(\ref{rcl})
model the problem of scattering of acoustic plane waves by a two-dimensional sound-soft surface and
a two-dimensional sound-hard surface, respectively.
In the case when the local perturbation is invariant in the $x_2$-direction,
these two problems can be reduced to the two-dimensional case, which also model the problem of
electromagnetic plane waves by a locally perturbed, perfectly reflecting, infinite plane in
the transverse-electric and transverse-magnetic polarization cases, respectively.

The existence of a unique solution to the problem (\ref{he-l})-(\ref{Dbcl}) and (\ref{rcl})
has been proved in \cite{W} by the integral equation method,
while the existence of a unique solution to the problem (\ref{he-l})-(\ref{Dbcl}) and (\ref{rcl})
can also be established similarly by using the integral equation method (see \cite[Remark 4.7 (i)]{W}).
We remark that, in the two-dimensional case, the well-posedness of the problem (\ref{he-l})-(\ref{Dbcl})
and (\ref{rcl}) was proved in \cite{ZZ13} by a new integral equation formulation and in \cite{BaoLin2011}
by the variational method, while the existence and uniqueness of solutions to the problem (\ref{he-l}) and (\ref{Nbcl})-(\ref{rcl}) was established in \cite{QZZ} by the integral equation method.
From the results in \cite{W} it is known that $u^s$ has the following asymptotic behavior at infinity
\ben
u^s(x,d)=\frac{e^{ik|x|}}{|x|}\left[u^\infty(\hat x,d)+O\left(\frac1{|x|}\right)\right],
\;\;|x|\ra\infty
\enn
uniformly for all observation directions $\hat{x}\in\Sp^2_+$ with $\Sp^2_+:=\{x=(x_1,x_2,x_3):|x|=1,x_3>0\}$
the upper part of the unit sphere $\Sp^2$,
where $u^\infty(\hat{x},d)$ is the far-field pattern of the scattered field $u^s$ which is an analytic
function of $\hat{x}\in\Sp^2_+$ for each $d\in\Sp^2_-$ and of $d\in\Sp^2_-$ for each $\hat{x}\in\Sp^2_+$.

Throughout this paper, we assume that the wave number $k$ is arbitrarily fixed. Following \cite{XZZ,ZZ01,ZZ03},
we make use of the following superposition of two plane waves as the incident field:
\ben
u^i=u^i(x;d_1,d_2)=u^i(x,d_1)+u^i(x,d_2)=e^{ikx\cdot d_1}+e^{ikx\cdot d_2},
\enn
where the incident directions $d_1,d_2\in\Sp^2$ for the obstacle and medium
scattering problems and $d_1,d_2\in\Sp^2_-$ for the locally rough surface scattering problems. 
Then the scattered field $u^s$ has the asymptotic behavior
\ben
u^s(x;d_1,d_2)=\frac{e^{ik|x|}}{|x|}\left\{u^\infty(\hat{x};d_1,d_2)
+O\left(\frac{1}{|x|}\right)\right\},\;\;|x|\to\infty
\enn
uniformly for all observation directions $\hat{x}\in\Sp^2$ for the obstacle and medium
scattering problems or the asymptotic behavior
\ben
u^s(x;d_1,d_2)=\frac{e^{ik|x|}}{|x|}\left\{u^\infty(\hat{x};d_1,d_2)
+O\left(\frac{1}{|x|}\right)\right\},\;\;|x|\to\infty
\enn
uniformly for all observation directions $\hat{x}\in\Sp^2_+$ for the locally rough surface scattering problems.
From the linear superposition principle it follows that
\[u^s(x;d_1,d_2)=u^s(x,d_1)+u^s(x,d_2)\] and
\ben
u^\infty(\hat{x};d_1,d_2)=u^\infty(\hat{x},d_1)+u^\infty(\hat{x},d_2),
\enn
where $u^s(x,d_j)$ and $u^\infty(\hat{x},d_j)$ are the scattered field and its far-field pattern
corresponding to the incident plane wave $u^i(x,d_j)$, respectively, $j=1,2$.

The {\em inverse obstacle or medium scattering problem} we consider in this paper is to reconstruct
the obstacle $D$ and its physical property or the index of refraction $n$ of the inhomogeneous medium
from the phaseless far-field pattern $|u^\infty(\hat{x};d_1,d_2)|$ for $\hat{x},d_j\in\Sp^2$, $j=1,2$,
while the {\em inverse rough surface scattering problem} considered is to recover the locally rough surface
$\G$ from the phaseless far-field pattern $|u^\infty(\hat{x};d_1,d_2)|$ for $\hat{x}\in\Sp_+^2$,
$d_j\in\Sp^2_-$, $j=1,2$. The purpose of this paper is to prove uniqueness results for these inverse problems.

\section{Uniqueness for inverse obstacle scattering}\label{o}

Denote by $u_j^s$ and $u_j^\infty$ the scattered field and its far-field pattern, respectively,
associated with the obstacle $D_j\cup B$ and corresponding to the incident field $u^i$, $j=1,2$,
where $B$ is a given, sound-soft reference ball. See Fig. \ref{figo} for the geometry of the scattering problem.
Assume that $k^2$ is not a Dirichlet eigenvalue of $-\Delta$ in $B$, where $k^2$ is called 
a Dirichlet eigenvalue of $-\Delta$ in $B$ if the interior Dirichlet boundary value problem
\[\begin{cases}
\Delta u+k^2u=0 & \text{in}\;\;B,\\
u=0 & \text{on}\;\;\pa B
\end{cases}\]
has a nontrivial solution $u$. Note that if the radius $\rho$ of the ball $B$ satisfies that $k\rho<\pi$ 
then $k^2$ is not a Dirichlet eigenvalue of $-\Delta$ in $B$ (see \cite[Corollary 5.3]{CK}).

\begin{figure}[!ht]
\centering
\includegraphics[width=0.7\textwidth]{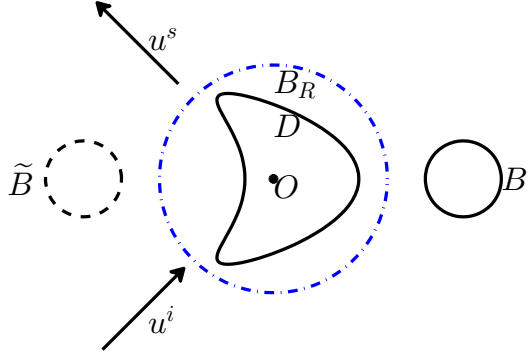}
\vspace{-1cm}
\caption{Scattering by a bounded obstacle.
}\label{figo}
\end{figure}

\begin{theorem}\label{ot}
Assume that $B$ is a given sound-soft reference ball such that $k^2$ is not a Dirichlet eigenvalue
of $-\Delta$ in $B$. Suppose $D_1$ and $D_2$ are two obstacles with $\ov{D_1\cup D_2}\subset B_R$,
where $B_R$ is a ball of radius $R$ and centered at the origin and satisfies that
$\ov{B}\cap\ov{B_R}=\emptyset$. If the corresponding far-field patterns satisfy that
\be\label{or=}
&&|u_1^\infty(\hat x,d)|=|u_2^\infty(\hat x,d)|\;\;\forall\hat x,d\in\Sp^2,\\ \label{om=}
&&|u_1^\infty(\hat x;d,d_0)|=|u_2^\infty(\hat x;d,d_0)|\;\;\forall\hat x,d\in\Sp^2
\en
for an arbitrarily fixed $d_0\in\Sp^2$, then $D_1=D_2$ and $\mathscr{B}_1=\mathscr{B}_2$.
\end{theorem}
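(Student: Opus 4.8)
The plan is to first upgrade the phaseless data to the full far-field identity $u_1^\infty(\hat x,d)=u_2^\infty(\hat x,d)$ for all $\hat x,d\in\Sp^2$, and then to run the classical uniqueness argument for the inverse obstacle problem based on Rellich's lemma and Green's representation formula. The sole purpose of the sound-soft reference ball $B$, placed at a known location disjoint from $B_R$, is to remove the phase ambiguity left by the phaseless data: this is precisely the ambiguity that the translation invariance of $|u^\infty|$ produces when no reference object is present.

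The starting point is the superposition identity $u_j^\infty(\hat x;d,d_0)=u_j^\infty(\hat x,d)+u_j^\infty(\hat x,d_0)$. Squaring the moduli in (\ref{om=}) and subtracting off the terms controlled by (\ref{or=}) (used at $d$ and at $d_0$) gives, for all $\hat x,d\in\Sp^2$,
\[
|u_1^\infty(\hat x,d)|=|u_2^\infty(\hat x,d)|,\qquad
\mathrm{Re}\left[u_1^\infty(\hat x,d)\,\overline{u_1^\infty(\hat x,d_0)}\right]=\mathrm{Re}\left[u_2^\infty(\hat x,d)\,\overline{u_2^\infty(\hat x,d_0)}\right].
\]
Fix $\hat x$ with $u_1^\infty(\hat x,d_0)\neq0$ (a nonempty open set, since the sound-soft ball already forces $u_j^\infty(\cdot,d_0)\not\equiv0$) and put $F_j(d):=u_j^\infty(\hat x,d)/u_j^\infty(\hat x,d_0)$. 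These two identities together with (\ref{or=}) force $\bigl(F_1(d)-F_2(d)\bigr)\bigl(F_1(d)-\overline{F_2(d)}\bigr)=0$ for every $d\in\Sp^2$; since the far-field patterns are real-analytic and the zero set of a nontrivial real-analytic function has measure zero, it follows that either $F_1\equiv F_2$ on $\Sp^2$ or $F_1\equiv\overline{F_2}$ on $\Sp^2$. Letting $\hat x$ vary and using joint real-analyticity and connectedness, one of these alternatives holds globally, i.e. $u_1^\infty(\hat x,d)=c(\hat x)\,u_2^\infty(\hat x,d)$ with $|c(\hat x)|=1$, or the same with $u_2^\infty$ replaced by $\overline{u_2^\infty}$; applying reciprocity $u_j^\infty(\hat x,d)=u_j^\infty(-d,-\hat x)$ then shows $c(\hat x)$ is a unimodular constant.

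The main difficulties are to discard the complex-conjugate alternative and to pin down the constant. I expect the conjugate alternative $u_1^\infty(\hat x,d)=c'\overline{u_2^\infty(\hat x,d)}$ to be excluded by combining reciprocity, analyticity and an optical-theorem / energy argument together with the specific structure of the problem (as in \cite{XZZ}); the presence of the non-absorbing reference ball, which makes the scattering cross section strictly positive, is what makes such an argument available. In the remaining case $u_1^\infty(\hat x,d)=c\,u_2^\infty(\hat x,d)$ with $|c|=1$, I would invoke Rellich's lemma: $u_1^s(\cdot,d)-c\,u_2^s(\cdot,d)$ is a radiating Helmholtz solution in the unbounded component $G$ of $\R^3\setminus\overline{D_1\cup D_2\cup B}$ with vanishing far-field pattern, hence $u_1^s(\cdot,d)=c\,u_2^s(\cdot,d)$ in $G$; restricting to $\partial B\subset G$ and using that $u_j^s=-u^i$ on $\partial B$ (sound-soft ball) yields $(1-c)\,e^{ikx\cdot d}=0$ on $\partial B$, so $c=1$. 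Thus $u_1^\infty(\hat x,d)=u_2^\infty(\hat x,d)$ for all $\hat x,d\in\Sp^2$.

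From here the argument is classical. Rellich's lemma gives $u_1^s(\cdot,d)=u_2^s(\cdot,d)$, hence $u_1(\cdot,d)=u_2(\cdot,d)$, in $G$. If $D_1\neq D_2$, pick without loss of generality $x^*\in\partial D_1$ with $x^*\notin\overline{D_2\cup B}$; since plane-wave far-field data at all incident directions generates, through mixed reciprocity and a density argument, the scattered fields for point sources, one reaches a contradiction between the blow-up of the $D_1$-solution as the source tends to $x^*$ (read off from Green's representation formula) and the boundedness of the $D_2$-solution near $x^*$. Hence $D_1=D_2=:D$, $u_1(\cdot,d)=u_2(\cdot,d)$ up to $\partial D$, and then $(\mathscr B_1-\mathscr B_2)u_1=0$ on $\partial D$ for every plane-wave incidence; letting $u^i$ range over a family dense in the $H^1$-solutions of the Helmholtz equation (e.g. Herglotz wave functions) forces $\mathscr B_1=\mathscr B_2$. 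The delicate points I anticipate are the analyticity-and-connectedness bookkeeping in the phase-retrieval step and the exclusion of the conjugate alternative.
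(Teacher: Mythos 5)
Your reduction of the phaseless data to the dichotomy $u_1^\infty=e^{i\alpha}u_2^\infty$ versus $u_1^\infty=e^{i\beta}\ov{u_2^\infty}$, and your treatment of the first alternative (Rellich's lemma, restriction to $\pa B$, the sound-soft condition forcing $e^{i\alpha}=1$, then the classical uniqueness theorem), coincide with the paper's argument. The genuine gap is the exclusion of the conjugate alternative (\ref{o2}). You propose to dispose of it ``by an optical-theorem / energy argument as in \cite{XZZ}'', but that is precisely the step of \cite{XZZ} that requires the a priori assumptions (sound-soft or non-absorbing impedance obstacle) which Theorem \ref{ot} is designed to remove: here $D_1,D_2$ may carry impedance or partially coated boundary conditions with $\I[\eta]\ge 0$, so $D_j\cup B$ may be absorbing, the eigenvalues of the far-field operator need not lie on the circle exploited in the spectral argument, and the strictly positive cross section contributed by $B$ does not repair this. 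The key new step of the proof is therefore missing from your proposal.

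The paper's mechanism for this step is different and elementary: write $u_2^\infty(\hat x,d)$ via Green's representation as an integral over $\pa B\cup\pa B_R$, take the complex conjugate, and substitute $y\mapsto -y$; this exhibits $e^{i\beta}\ov{u_2^\infty(\cdot,d)}$ as the far-field pattern of a radiating Helmholtz solution defined on $\R^3\se\ov{\wid{B}\cup B_R}$, where $\wid{B}$ is the reflection of $B$ through the origin. Rellich's lemma identifies $u_1^s(\cdot,d)$ with that solution, so $u_1^s(\cdot,d)$ extends analytically across $B$ (note $\ov{B}\cap\ov{B_R}=\emptyset$ forces $O\notin B$ and $B\cap\wid{B}=\emptyset$) and satisfies the Helmholtz equation in all of $\R^3\se\ov{B_R}$. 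The total field $u_1=u^i+u_1^s$ then solves the interior Dirichlet problem in $B$ with zero boundary data, so the non-eigenvalue hypothesis gives $u_1\equiv 0$ in $B$ and, by analytic continuation, in $\R^3\se\ov{B_R}$, which is impossible. This is where the hypotheses $\ov{B}\cap\ov{B_R}=\emptyset$ and ``$k^2$ is not a Dirichlet eigenvalue of $-\Delta$ in $B$'' are actually used, and it requires no assumption on the physical nature of $D_1,D_2$. If you replace your optical-theorem sketch by this reflection argument, the rest of your proof goes through.
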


\begin{proof}
Arguing similarly as in the proof of (4.9) and (4.10) in \cite{XZZ} (see the proof of Theorem 2.2
in \cite{XZZ}), we can obtain from (\ref{or=}) and (\ref{om=}) that either
\be\label{o1}
u_1^\infty(\hat{x},d)=e^{i\alpha}u_2^\infty(\hat{x},d)\;\;\forall\hat{x},d\in\Sp^2
\en
or
\be\label{o2}
u_1^\infty(\hat{x},d)=e^{i\beta}\ov{u_2^\infty(\hat{x},d)}\;\;\forall\hat{x},d\in\Sp^2
\en
holds for some real constants $\alpha,\beta$ which are independent of $\hat x$ and $d$.

We now show that (\ref{o2}) does not hold. In fact, suppose (\ref{o2}) holds.
By Green's representation for the scattering solution $u_2^s$ in $\R^3\se(\ov{B\cup B_R})$
(see \cite[Theorem 2.5]{CK}) we obtain that for any fixed $d\in\Sp^2$,
\ben
u_2^s(x,d)=\int_{\pa B\cup\pa B_R}\left\{u_2^s(y,d)\frac{\pa\Phi(x,y)}{\pa\nu(y)}
-\frac{\pa u_2^s(y,d)}{\pa\nu(y)}\Phi(x,y)\right\}ds(y),\\
x\in\R^3\se(\ov{B\cup B_R}),
\enn
where $\nu(y)$ is the unit normal vector at $y\in\pa B$ or $y\in\pa B_R$ directed into the exterior 
of $B$ or $B_R$ and $\Phi(x,y)$ is the fundamental solution to the Helmholtz equation in $\R^3$ given by
\ben
\Phi(x,y):=\frac1{4\pi}\frac{e^{ik|x-y|}}{|x-y|},\;\;\;x\not=y,
\enn
and thus the far-field pattern of $u_2^s$ is given as follows (see \cite[(2.14)]{CK}):
\ben
u_2^\infty(\hat x,d)=\frac1{4\pi}\int_{\pa B\cup\pa B_R}\left\{u_2^s(y,d)
\frac{\pa e^{-ik\hat x\cdot y}}{\pa\nu}
-\frac{\pa u_2^s(y,d)}{\pa\nu}e^{-ik\hat x\cdot y}\right\}ds(y),\;\;\hat x\in\Sp^2.
\enn
From this and (\ref{o2}) it follows that for any $\hat{x}\in\Sp^2$,
\ben
u_1^\infty(\hat x,d)&=&e^{i\beta}\ov{u_2^\infty(\hat x,d)}\\
&=&\frac{e^{i\beta}}{4\pi}\int_{\pa B\cup\pa B_R}\left\{\ov{u_2^s}(y,d)
   \frac{\pa e^{ik\hat x\cdot y}}{\pa\nu}
   -\frac{\pa\ov{u_2^s}}{\pa\nu}(y,d)e^{ik\hat x\cdot y}\right\}ds(y)\\
&=&\frac{e^{i\beta}}{4\pi}\int_{\pa\wid{B}\cup\pa B_R}\left\{\ov{u_2^s}(-y,d)
   \frac{\pa e^{-ik\hat x\cdot y}}{\pa\nu}
   -\frac{\pa\ov{u_2^s}}{\pa\nu}(-y,d)e^{-ik\hat x\cdot y}\right\}ds(y),
\enn
where $\wid{B}:=\{x\in\R^3:-x\in B\}$. 
Rellich's lemma gives
\be\no
u_1^s(x,d)=e^{i\beta}\int_{\pa\wid{B}\cup\pa B_R}\left\{\ov{u_2^s}(-y,d)
\frac{\pa\Phi(x,y)}{\pa\nu(y)}-\frac{\pa\ov{u_2^s}}{\pa\nu}(-y,d)\Phi(x,y)\right\}ds(y),\\ \label{ot+}
x\in\R^3\se\ov{\wid{B}\cup B_R}.\;\;
\en
This means that $u_1^s(\cdot,d)$ can be analytically extended into $\R^3\se\ov{\wid{B}\cup B_R}$ and
satisfies the Helmholtz equation in $\R^3\se\ov{\wid{B}\cup B_R}$. On the other hand,
$u_1^s(\cdot,d)$ also satisfies the Helmholtz equation in $\R^3\se\ov{B\cup B_R}$.
Since $\ov{B}\cap\ov{B_R}=\emptyset$, then the origin $O\notin B$ and $B\cap\wid{B}=\emptyset$.
Therefore, $u_1^s(\cdot,d)$ satisfies the Helmholtz equation in $\R^3\se\ov{B_R}$.
Noting that the total field $u_1:=u^i+u_1^s$ satisfies the sound-soft boundary condition 
on $\pa B$, we have
\ben
\begin{cases}
\Delta u_1+k^2u_1=0 & \mbox{in}\;\;\; B,\\
u_1=0 & \mbox{on}\;\;\; \pa B.
\end{cases}
\enn
Since $k^2$ is not a Dirichlet eigenvalue of $-\Delta$ in $B$, we have $u_1\equiv0$ in $B$,
which, together with the analyticity of the total field $u_1$ in $\R^3\se\ov{B_R}$, implies
that $u_1\equiv0$ in $\R^3\se\ov{B_R}$. This is a contradiction, so (\ref{o2}) does not hold.
Therefore, (\ref{o1}) holds.

Now, by (\ref{o1}) and Rellich's lemma we obtain that
\ben
u_1^s(x,d)=e^{i\alpha}u_2^s(x,d)\;\;\forall x\in G,\;d\in\Sp^2,
\enn
where $G$ denotes the unbounded component of the complement of $B\cup D_1\cup D_2$.
By the sound-soft boundary condition on $\pa B$, we have $u^s_j(x,d)=-e^{ikx\cdot d}$
for all $x\in\pa B$, $j=1,2$, which implies that
\[
-e^{ikx\cdot d}=-e^{i\alpha}e^{ikx\cdot d}\;\;\forall x\in\pa B.
\]
Hence $e^{i\alpha}=1$, which means that
\ben
u_1^\infty(\hat x,d)=u_2^\infty(\hat x,d)\;\;\forall\hat{x},\;d\in\Sp^2.
\enn
This, together with \cite[Theorem 5.6]{CK}, implies that $D_1=D_2$ and $\mathscr B_1=\mathscr B_2$.
The proof is thus complete.
\end{proof}

\begin{remark}\label{r1} {\rm
Theorem \ref{ot} remains true for the two-dimensional case, and the proof is similar.
}
\end{remark}

\section{Uniqueness for inverse medium scattering}\label{m}

Assume that $B$ is a given homogeneous medium ball characterized by the positive constant refractive
index $n_0\neq 1$. Assume further that $n_1,n_2\in L^\infty(\R^3)$ are the refractive indices
of two inhomogeneous media with $m_j:=n_j-1$ supported in $\ov{D_j},j=1,2$.
Denote by $u_j^s$ and $u_j^\infty$ the scattered field and its far-field pattern, respectively,
associated with the inhomogeneous medium with the refractive index $\wid{n}_j$ and corresponding
to the incident field $u^i$, $j=1,2$. Here, the refractive index $\wid{n}_j$ is given by
\ben
\wid{n}_j(x):=\begin{cases}
n_0, & x\in B,\\
n_j(x), & x\in\R^3\se\ov{B}
\end{cases}
\enn
for $j=1,2$. See Fig. \ref{figm} for a geometric description of the scattering problem.
Suppose $k^2$ is not an interior transmission eigenvalue of $-\Delta$ in $B$.  
Here, $k^2$ is called an interior transmission eigenvalue of $-\Delta$ in $B$
if the interior transmission problem
\ben
\begin{cases}
\Delta w+k^2n_0w=0\;&\;\text{in}\;\; B,\\
\Delta v+k^2v=0\;&\;\text{in}\;\; B,\\
\ds w=v,\;\;\frac{\pa w}{\pa\nu}=\frac{\pa v}{\pa\nu}\;&\;\text{on}\;\;\pa B
\end{cases}
\enn
has a nontrivial solution $(w,v)$. Note that if the radius $\rho$ of the ball $B$ satisfies
that $\rho<\pi/[2k(\sqrt{n_0}+1)]$ then $k^2$ is not an interior transmission eigenvalue
of $-\Delta$ in $B$ (see \cite[Theorem 4.1]{ZG18}).

\begin{figure}[!ht]
\centering
\includegraphics[width=0.7\textwidth]{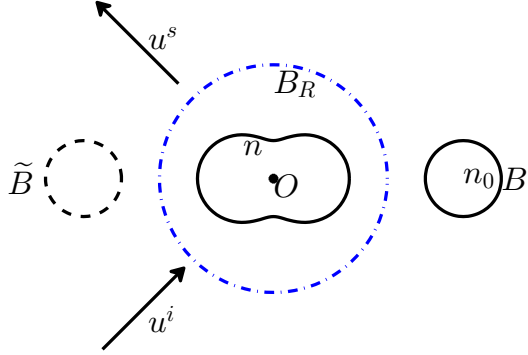}
\vspace{-1cm}
\caption{Scattering by an inhomogeneous medium.
}\label{figm}
\end{figure}

\begin{theorem}\label{mt}
Assume that $B$ is a given homogeneous medium ball with the constant refractive index $n_0\neq 1$
such that $k^2$ is not an interior transmission eigenvalue of $-\Delta$ in $B$.
Assume further that $n_1,n_2\in L^\infty(\R^3)$ are the refractive indices of two inhomogeneous
media with $m_j:=n_j-1$ supported in $\ov{D_j},j=1,2$. Suppose $\ov{D_1\cup D_2}\subset B_R$,
where $B_R$ is a ball of radius $R$ and centered at the origin and satisfies that
$\ov{B}\cap\ov{B_R}=\emptyset$. If the corresponding far field patterns satisfy $(\ref{or=})$
and $(\ref{om=})$ for an arbitrarily fixed $d_0\in\Sp^2$, then $n_1=n_2$.
\end{theorem}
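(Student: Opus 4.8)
The plan is to mirror the structure of the proof of Theorem~\ref{ot}, replacing the role of the sound-soft boundary condition on $\partial B$ by the transmission conditions and the interior transmission eigenvalue assumption. First I would invoke the same argument as in the proof of (4.9) and (4.10) in \cite{XZZ} to deduce from $(\ref{or=})$ and $(\ref{om=})$ that either
\be\label{m1}
u_1^\infty(\hat x,d)=e^{i\alpha}u_2^\infty(\hat x,d)\quad\forall\,\hat x,d\in\Sp^2
\en
or
\be\label{m2}
u_1^\infty(\hat x,d)=e^{i\beta}\ov{u_2^\infty(\hat x,d)}\quad\forall\,\hat x,d\in\Sp^2
\en
holds for some real constants $\alpha,\beta$ independent of $\hat x$ and $d$. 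The main work is then to rule out $(\ref{m2})$ and to upgrade $(\ref{m1})$ to equality of far-field patterns, after which a known uniqueness result for inverse medium scattering with full (phased) far-field data finishes the proof.

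To exclude $(\ref{m2})$, I would use Green's representation formula applied to the scattering solution $u_2^s(\cdot,d)$ in the exterior domain $\R^3\se\ov{B\cup B_R}$ — noting that $\wid n_2=1$ there, so $u_2^s$ is a radiating solution of the Helmholtz equation in that region. Taking the far-field pattern, substituting into $(\ref{m2})$, and applying complex conjugation together with the substitution $y\mapsto -y$ exactly as in the proof of Theorem~\ref{ot}, one obtains via Rellich's lemma that $u_1^s(\cdot,d)$ extends analytically as a solution of the Helmholtz equation into $\R^3\se\ov{\wid B\cup B_R}$, where $\wid B:=\{x\in\R^3:-x\in B\}$. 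Since $\ov B\cap\ov{B_R}=\emptyset$ forces $O\notin B$ and hence $B\cap\wid B=\emptyset$, the two representations together show $u_1^s(\cdot,d)$ solves the Helmholtz equation in all of $\R^3\se\ov{B_R}$, in particular across $B$. Now on $B$ the total field $u_1=u^i+u_1^s$ solves $\Delta u_1+k^2 n_0 u_1=0$, so setting $v:=u_1$ and $w:=u_1$ restricted to $B$ gives a solution of the interior transmission problem for $B$ — but $w=v$ means $k^2(n_0-1)w=0$ in $B$, and since $n_0\neq 1$ we get $u_1\equiv 0$ in $B$; by analyticity of $u_1$ in $\R^3\se\ov{B_R}$ (which contains a connected neighbourhood joining $B$ to the rest), $u_1\equiv 0$ in $\R^3\se\ov{B_R}$, a contradiction. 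Hence $(\ref{m2})$ fails and $(\ref{m1})$ holds.

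To show $e^{i\alpha}=1$, I would apply Rellich's lemma to $(\ref{m1})$ to get $u_1^s(x,d)=e^{i\alpha}u_2^s(x,d)$ for all $x$ in the unbounded component $G$ of the complement of $B\cup D_1\cup D_2$; in particular this holds in a neighbourhood of $\partial B$, and by unique continuation across the homogeneous region surrounding $B$ it holds on $\partial B$ itself. Then both total fields $u_j=u^i+u_j^s$ satisfy $\Delta u_j+k^2 n_0 u_j=0$ inside $B$ with matching Cauchy data on $\partial B$ up to the factor $e^{i\alpha}$ on the scattered parts; comparing the incident parts (which carry no such factor) forces $e^{i\alpha}=1$ by the same interior-transmission-eigenvalue argument as above, or more directly: $u_1-u_2$ solves the Helmholtz equation in the exterior with zero far-field pattern, hence $u_1^s=u_2^s$ in $G$, so $u_1^\infty=u_2^\infty$ on $\Sp^2$. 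Finally, equality of the full far-field patterns for all incident directions, together with the classical uniqueness theorem for the inverse medium problem (see, e.g., \cite[Chapter 10]{CK} or \cite{Kirsch}), yields $\wid n_1=\wid n_2$, and since $\wid n_1=\wid n_2=n_0$ on $B$ and $\wid n_j=n_j$ outside $\ov B$, we conclude $n_1=n_2$.

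The main obstacle I anticipate is the justification that $(\ref{m1})$ passed to $u_1^s=e^{i\alpha}u_2^s$ actually holds \emph{up to and on} $\partial B$ — i.e., that the unbounded component $G$ abuts $\partial B$ and that analytic continuation through the homogeneous shell around $B$ (which exists because $\ov B\cap\ov{B_R}=\emptyset$ and $m_j$ is supported in $\ov{D_j}\subset B_R$) is legitimate — and then correctly extracting $e^{i\alpha}=1$; the subtlety is that, unlike the Dirichlet case where $u_j^s=-u^i$ on $\partial B$ gives the phase immediately, here one must argue through the transmission problem, and care is needed to ensure $k^2$ not being an interior transmission eigenvalue of $B$ is exactly what makes the homogeneous test function $u^i$ (together with the relation $w=v$ on $\partial B$) force the phase to be trivial rather than merely constraining $u_1-e^{i\alpha}u_2$.
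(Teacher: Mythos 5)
Your route is essentially the paper's: derive the dichotomy $u_1^\infty=e^{i\alpha}u_2^\infty$ versus $u_1^\infty=e^{i\beta}\ov{u_2^\infty}$ from the argument of \cite{XZZ}, rule out the conjugate case by Green's representation plus reflection plus Rellich's lemma (so that $u_1$ would satisfy both $\Delta u_1+k^2u_1=0$ and $\Delta u_1+k^2n_0u_1=0$ in $B$, forcing $u_1\equiv0$ there since $n_0\neq1$, and then $u_1\equiv0$ by analytic continuation), fix the phase via the interior transmission problem in $B$, and conclude with the classical phased uniqueness theorem. Two points need attention.

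First, before you can ``invoke the same argument as in the proof of (4.9) and (4.10) in \cite{XZZ}'' you must verify that $u_j^\infty(\cdot,d_0)\not\equiv0$ on $\Sp^2$ for $j=1,2$: that argument manipulates the phase of $u_j^\infty(\hat x,d_0)$ and breaks down if this far field vanishes identically (in Theorem 3.1 the sound-soft ball makes this automatic, but here it is not). The paper opens its proof with exactly this claim, proved from the hypothesis that $k^2$ is not an interior transmission eigenvalue of $B$: if $u_j^\infty(\cdot,d_0)\equiv0$, Rellich gives $u_j^s\equiv0$ outside $\ov{B\cup D_j}$, so $(u_j,u^i)$ would be a nontrivial interior transmission eigenpair in $B$. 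Your proposal omits this step.

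Second, your ``more directly'' alternative for obtaining $e^{i\alpha}=1$ is circular: the far-field pattern of $u_1^s-u_2^s$ is $(e^{i\alpha}-1)u_2^\infty$, and asserting it vanishes is precisely what is to be proved. Your primary route is the correct one and is what the paper does: set $u:=u_1-e^{i\alpha}u_2$ and $v:=(1-e^{i\alpha})u^i$; then $u=v$ in the unbounded component $G$ (which abuts $\pa B$ because $\ov{B}\cap\ov{B_R}=\emptyset$ and $D_1\cup D_2\subset B_R$), so $u$ and $v$ have matching Cauchy data on $\pa B$, while $\Delta u+k^2n_0u=0$ and $\Delta v+k^2v=0$ in $B$; the non-eigenvalue assumption forces $v\equiv0$ in $B$, i.e. $e^{i\alpha}=1$. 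Commit to that version and drop the aside.
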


\begin{proof}
We first claim that for any fixed $d_0\in\Sp^2$ we have $u^\infty_j(\cdot,d_0)\not\equiv0$ in $\Sp^2$,
$j=1,2$. Otherwise, $u^\infty_j(\hat{x},d_0)\equiv0$ for any $\hat{x}\in\Sp^2$, and so, by Rellich's
lemma, $u^s_j({x},d_0)\equiv0$ for any ${x}\in\R^3\se(\ov{B\cup D_j})$, $j=1,2$.
Thus, $\Delta u_j+k^2n_0 u_j=0$ in $B$, where $u_j=u^i+u_j^s$ is the total field satisfying that
$u_j=u^i$, $\pa u_j/\pa\nu=\pa u^i/\pa\nu$ on $\pa B$. Note that $\Delta u^i+k^2u^i=0$ in $B$.
Since $k^2$ is not an interior transmission eigenvalue of $-\Delta$ in $B$, we have $u_j=u^i\equiv0$ in $B$.
This is a contradiction, implying that our claim is true.
Thus, by arguing similarly as in the beginning of the proof of Theorem \ref{ot} it follows from (\ref{or=})
and (\ref{om=}) that either (\ref{o1}) or (\ref{o2}) holds.

Again we can prove that (\ref{o2}) does not hold. Suppose (\ref{o2}) holds to the contrary.
Then, by a similar argument as in the proof of (\ref{ot+}) we can obtain that
\be\no
u_1^s(x,d)=e^{i\beta}\int_{\pa\wid{B}\cup\pa B_R}\left\{\ov{u_2^s}(-y,d)
\frac{\pa\Phi(x,y)}{\pa\nu(y)}-\frac{\pa\ov{u_2^s}}{\pa\nu}(-y,d)\Phi(x,y)\right\}ds(y),\\ \label{mt+}
  x\in\R^3\se\ov{\wid{B}\cup B_R}.\;\;
\en
Since $\ov{B}\cap\ov{B_R}=\emptyset$, then $B\cap\wid{B}=\emptyset$ and $B\subset\R^3\se\ov{\wid{B}\cup B_R}$.
Thus, by (\ref{mt+}) we know that the total field $u_1:=u^i+u_1^s$ satisfies that
$\Delta u_1+k^2u_1=0$ in $B$. On the other hand, by the definition of $u_1$ we have that
$\Delta u_1+k^2n_0u_1=0$ in $B$. 
Then, we obtain that $u_1\equiv0$ in $B$.
By (\ref{mt+}) again it is known that $u_1^s$ (and thus $u_1$) is analytic in $\R^3\se\ov{\wid{B}\cup B_R}$.
Thus, and on noting that $B\subset\R^3\se\ov{\wid{B}\cup B_R}$, we obtain by the analytic continuation
that $u_1\equiv0$ in $\R^3\se\ov{\wid{B}\cup B_R}$.
This is a contradiction, implying that (\ref{o2}) does not hold. Hence, (\ref{o1}) holds.

By (\ref{o1}) and Rellich's lemma it follows that
\be\label{os+}
u_1^s(x,d)=e^{i\alpha}u_2^s(x,d)\;\;\forall x\in G,\;d\in\Sp^2,
\en
where $G$ denotes the unbounded component of the complement of $B\cup D_1\cup D_2$.
For any fixed $d\in\Sp^2$, define $u:=u_1-e^{i\alpha}u_2$ and $v:=(1-e^{i\alpha})u^i$ in $\R^3$.
Then, by (\ref{os+}) it is seen that $u=v$ in $G$, and so, $u=v$ and
$\pa u/\pa\nu=\pa v/\pa\nu$ on $\pa B$. On the other hand, by the definition of $u_j$, $j=1,2$,
we have $\Delta u+k^2n_0u=0$ in $B$. As a result, we obtain that
\ben
\begin{cases}
\Delta u+k^2n_0u=0 & \text{in}\;\;B,\\
\Delta v+k^2v=0 & \text{in}\;\;B,\\
u=v,\;\;\pa u/\pa\nu=\pa v/\pa\nu & \text{on}\;\;\pa B.
\end{cases}
\enn
Since $k^2$ is not an interior transmission eigenvalue of $-\Delta$ in $B$, then we must have
$e^{i\alpha}=1$, and so, (\ref{o1}) becomes
\ben
u_1^\infty(\hat x,d)=u_2^\infty(\hat x,d)\;\;\forall\hat{x},\;d\in\Sp^2.
\enn
By this and \cite[Theorem 6.26]{Kirsch} we have $n_1=n_2$. The proof is thus complete.
\end{proof}

\begin{remark}\label{r2} {\rm
Theorem \ref{mt} also holds in two dimensions if the assumption $n_j\in L^\infty(\R^3)$ is
replaced by the condition that $n_j$ is piecewise in $W^{1,p}(D_j)$ with $p>2$, $j=1,2$.
In this case, the proof is similar except that we need Bukhgeim's result in \cite{B} (see also
the theorem in Section 4.1 in \cite{Blasten}) instead of \cite[Theorem 6.26]{Kirsch} in the proof.
}
\end{remark}

\section{Uniqueness for inverse scattering by locally rough surfaces}\label{lrs}

Suppose $h_j\in C^2(\R^2)$ has a compact support in $\R^2$, $j=1,2$. 
Let us denote by $\G_j=\{(x_1,x_2,x_3):x_3=h_j(x_1,x_2),\,(x_1,x_2)\in\R^2\}$ the locally rough surface
and by $D_{j,+}=\{(x_1,x_2,x_3):x_3>h_j(x_1,x_2),\,(x_1,x_2)\in\R^2\}$ the half-space above the locally 
rough surface $\G_j$, $j=1,2$.
Let $B_R:=\{x\in\R^3:|x|<R\}$ be a ball with radius $R$ large enough so that the local
perturbation $\G_{jp}:=\G_j\ba\G_0=\{(x_1,x_2,h_j(x_1,x_2)):(x_1,x_2)\in\mbox{suppt}(h_j)\}\subset B_R$,
$j=1,2$, where $\G_0:=\{(x_1,x_2,x_3)\in\R^3:x_3=0\}$ is the infinite plane. 
Then $\G_{jR}:=\G_j\cap B_R$ represents the part of $\G_j$ containing the local perturbation
$\G_{jp}$ of the infinite plane $\G_0$, $j=1,2$.
Let $B\subset D_{1,+}\cap D_{2,+}$ be a given, sound-soft reference ball (that is, above the locally
rough surfaces $\G_1$ and $\G_2$) satisfying that $k^2$ is not a Dirichlet eigenvalue of $-\Delta$ in $B$. 
See Fig. \ref{figr} for the geometry of the scattering problem.

Denote by $u_j^s$ and $u_j^\infty$ the scattered field and its far-field pattern, respectively,
associated with $\{\G_j,B\}$ and corresponding to the incident plane wave $u^i$, $j=1,2$.
Then we have the following theorem on uniqueness in inverse scattering by locally rough surfaces
with phaseless far-field data.

\begin{figure}[!ht]
  \centering
  \includegraphics[width=0.7\textwidth]{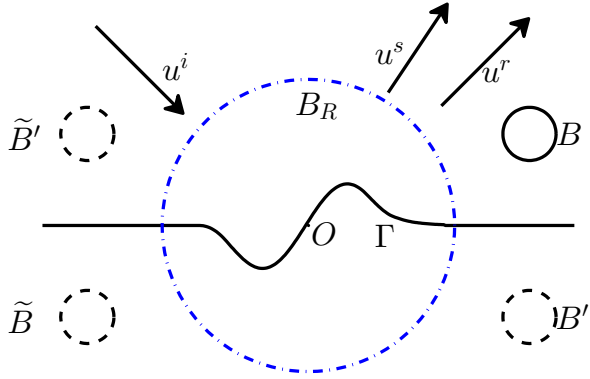}
  \vspace{-1cm}
  \caption{Scattering by a locally rough surface.
  }\label{figr}
\end{figure}

\begin{theorem}\label{lt}
Suppose $\G_j$, $j=1,2,$ are two sound-soft, locally rough surfaces.
Assume that $\ov{B}\cap(\ov{B_R}\cup\{(0,0,x_3):x_3\in\R\})=\emptyset$.
If the corresponding far-field patterns satisfy that
\be\label{r=}
|u_1^\infty(\hat{x},d)|&=&|u_2^\infty(\hat{x},d)|\quad\forall\hat{x}\in\Sp^2_+,\;d\in\Sp^2_-,\\ \label{m=}
|u_1^\infty(\hat{x};d,d_0)|&=&|u_2^\infty(\hat{x};d,d_0)|\quad\forall\hat{x}\in\Sp^2_+,\;d\in\Sp^2_-
\en
for an arbitrarily fixed $d_0\in\Sp^2_-$, then $\G_1=\G_2$.
\end{theorem}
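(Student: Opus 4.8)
The plan is to mirror the structure of the proofs of Theorems \ref{ot} and \ref{mt}, adapting each step to the half-space geometry and the reflected wave $u^r$. First I would invoke the same preliminary argument used at the start of the proof of Theorem \ref{ot} (the one referencing (4.9)--(4.10) in \cite{XZZ}) to deduce from (\ref{r=}) and (\ref{m=}) that either
\be\label{l1}
u_1^\infty(\hat{x},d)=e^{i\alpha}u_2^\infty(\hat{x},d)\quad\forall\hat{x}\in\Sp^2_+,\;d\in\Sp^2_-
\en
or
\be\label{l2}
u_1^\infty(\hat{x},d)=e^{i\beta}\ov{u_2^\infty(\hat{x},d)}\quad\forall\hat{x}\in\Sp^2_+,\;d\in\Sp^2_-
\en
for some real constants $\alpha,\beta$. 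A small technical point to check here is that the argument of \cite{XZZ}, originally for the full sphere, still applies when $\hat x$ ranges only over $\Sp^2_+$ and $d$ over $\Sp^2_-$; this should follow from analyticity of $u_j^\infty$ in both variables on these open hemispheres together with the fixed-$d_0$ trick, exactly as in the bounded-obstacle case.

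Next I would rule out (\ref{l2}). The key is a Green's representation for the scattered field $u_2^s$ in the region $D_{2,+}\se\ov{B\cup B_R}$, using the half-space setting from \cite{W}: one represents $u_2^s$ via integrals over $\pa B$ and over $\G_{2R}$ (the part of $\G_2$ inside $B_R$), together with the relevant boundary integral on the truncation sphere, whose contribution vanishes in the far-field limit by the radiation condition. Taking the far-field pattern and conjugating as in the passage leading to (\ref{ot+}), the reflection $y\mapsto -y$ sends $\pa B$ to $\pa\wid B$ and $\G_{2R}$ to its point-reflection through the origin; crucially, because $\ov B$ is disjoint from the $x_3$-axis line $\{(0,0,x_3)\}$ and from $\ov{B_R}$, the reflected ball $\wid B$ is disjoint from $B$, and (using $\G_{2p}\subset B_R$) the reflected surface piece stays inside $B_R$ as well. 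Rellich's lemma then extends $u_1^s(\cdot,d)$ analytically across $\pa B$, so $u_1^s(\cdot,d)$ solves the Helmholtz equation in a neighbourhood of $\ov B$; combined with the original Helmholtz equation satisfied by $u_1^s$ outside $\ov{B\cup B_R}$ and the fact that $B$ lies strictly above $\G_1$, one concludes $u_1^s$ satisfies $\Delta u_1^s+k^2u_1^s=0$in all of $B$. Then $u_1=u^i+u^r+u_1^s$ solves the interior Dirichlet problem in $B$ with $u_1=0$ on $\pa B$, and since $k^2$ is not a Dirichlet eigenvalue of $-\Delta$ in $B$ we get $u_1\equiv 0$ in $B$, hence by analytic continuation $u_1\equiv 0$ in $D_{1,+}\se\ov{B_R}$, a contradiction. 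So (\ref{l1}) holds.

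Finally, from (\ref{l1}) and Rellich's lemma, $u_1^s=e^{i\alpha}u_2^s$ on the unbounded component of $D_{1,+}\cap D_{2,+}$ minus $\ov B$. Evaluating on $\pa B$ and using the sound-soft condition $u_j^s=-(u^i+u^r)$ there, together with the fact that $u^i+u^r\not\equiv 0$ on $\pa B$ (for generic incident direction, $u^i+u^r$ is a nontrivial Helmholtz solution, so it cannot vanish on the whole sphere), forces $e^{i\alpha}=1$. Hence $u_1^\infty(\hat x,d)=u_2^\infty(\hat x,d)$ for all $\hat x\in\Sp^2_+$, $d\in\Sp^2_-$, and a known uniqueness theorem for inverse scattering by a sound-soft locally rough surface with phased far-field data (as in \cite{ZZ02} or the mixed reciprocity / Isakov-type argument adapted to the half-space) yields $\G_1=\G_2$. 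I expect the main obstacle to be the second step: setting up the Green's representation correctly in the locally-perturbed half-space and verifying that the point-reflection through the origin keeps the reflected surface piece inside $B_R$ while keeping $\wid B$ away from both $B$ and $B_R$ — this is exactly what the hypothesis $\ov B\cap(\ov{B_R}\cup\{(0,0,x_3):x_3\in\R\})=\emptyset$ is designed to guarantee, and the disjointness from the $x_3$-axis is the new ingredient not present in Theorems \ref{ot} and \ref{mt}.
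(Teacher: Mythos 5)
Your skeleton is the right one (trichotomy from the phaseless data, rule out the conjugate case by Green's representation plus point reflection, pin down $e^{i\alpha}=1$ on $\pa B$, conclude by a phased uniqueness theorem), but both of the genuinely half-space-specific steps have gaps. First, the reduction of (\ref{r=})--(\ref{m=}) to the two alternatives is not ``exactly as in the bounded-obstacle case'': the argument of \cite{XZZ} needs the reciprocity relation and the nontriviality of $u_j^\infty(\cdot,d_0)$, and in the half-space setting reciprocity takes the modified form $u^\infty(\hat x,d)=u^\infty(-d,-\hat x)$ for $\hat x\in\Sp^2_+$, $d\in\Sp^2_-$, which is not standard and requires its own proof (the paper devotes Lemma \ref{reci} to it, via an odd reflection of the total field and Green's identities on $\pa B_{R'}$); likewise $u_j^\infty(\cdot,d_0)\not\equiv0$ is proved as Lemma \ref{NotEquiv0} using the reference ball. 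Your ``analyticity plus the fixed-$d_0$ trick'' does not supply either ingredient.

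Second, and more seriously, your Green's representation step is not set up correctly and misses the object that the axis hypothesis is actually about. The region $D_{2,+}\se\ov{B\cup B_R}$ has the unbounded flat part $\G_2\se\ov{B_R}$ in its boundary, so the representation you describe (integrals over $\pa B$ and $\G_{2R}$ plus a truncation sphere) is not the boundary integral of the domain you name, and the contribution from the unbounded flat piece is never addressed. The paper resolves this by first extending $u_2^s$ oddly across $\{x_3=0\}$ (Lemma \ref{extension}), which turns the problem into an exterior one for the bounded set $B_R\cup B\cup B'$, where $B'$ is the mirror image of $B$ in the plane $x_3=0$; the standard exterior representation then runs over $\pa B_R\cup\pa B\cup\pa B'$. (If you instead used the half-space Dirichlet Green's function, its image singularity would reintroduce $B'$ anyway.) After conjugation and the substitution $y\mapsto-y$, the singular supports become $B_R$, $\wid B$ \emph{and} $\wid B'$, where $\wid B'$ is the reflection of $B$ in the $x_3$-axis and hence lies in the upper half-space at the same height as $B$. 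It is $\ov B\cap\ov{\wid B'}=\emptyset$ that the hypothesis $\ov B\cap\{(0,0,x_3):x_3\in\R\}=\emptyset$ guarantees; the disjointness $B\cap\wid B=\emptyset$ that you attribute it to is automatic, since $B$ lies above and $\wid B$ below the plane $x_3=0$. Because your argument never produces $B'$ or $\wid B'$, the contradiction in the exclusion of (\ref{l2}) does not go through as written. The final step (forcing $e^{i\alpha}=1$ from the Dirichlet condition on $\pa B$ and invoking a phased uniqueness theorem, e.g.\ Theorem 4.1 of \cite{ZZ13}) is fine.
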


\subsection{Some lemmas}

To prove Theorem \ref{lt} we need some lemmas. In this subsection, we assume that $h\in C^2(\R^2)$ 
has a compact support in $\R^2$. Denote by $\G:=\{(x_1,x_2,x_3):x_3=h(x_1,x_2),(x_1,x_2)\in\R^2\}$ and 
$D_+=\{(x_1,x_2,x_3):x_3>h(x_1,x_2),\,(x_1,x_2)\in\R^2\}$ the locally rough surface 
and the half-space above the locally rough surface $\G$, respectively.
Assume that the local perturbation 
$\G_{p}:=\G\ba\G_0=\{(x_1,x_2,h(x_1,x_2)):(x_1,x_2)\in\mbox{suppt}(h)\}\subset B_R$.
Then $\G_{R}:=\G\cap B_R$ represents the part of $\G$ containing the local perturbation
$\G_{p}$ of the infinite plane $\G_0$.
Assume further that the sound-soft ball $B\subset D_+$ with $\ov{B}\cap\ov{B_R}=\emptyset$.

\begin{lemma}\label{NotEquiv0}
For any fixed $d_0\in\Sp_-$, let $u^\infty(\hat x,d_0)$ denote the far-field pattern 
corresponding to $\{\G,B\}$. Then $u^\infty(\cdot,d_0)\not\equiv0\;\;\text{in}\;\;\Sp_+^2.$
\end{lemma}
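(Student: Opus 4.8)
The plan is to argue by contradiction, in the spirit of the opening ``claim'' in the proof of Theorem~\ref{mt}. Suppose to the contrary that $u^\infty(\hat x,d_0)=0$ for all $\hat x\in\Sp^2_+$. Since $h$ has compact support contained in $B_R$, the surface $\G$ coincides with the flat plane $\G_0=\{x_3=0\}$ outside $B_R$, and $u^s(\cdot,d_0)$ is a radiating solution of $\Delta u^s+k^2u^s=0$ in $D_+\se\ov{B}$. Moreover, because $\ov{B}\cap\ov{B_R}=\emptyset$ and $h$ vanishes outside $B_R$, the reference ball $B$ actually lies in the open upper half-space $\{x_3>0\}$. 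The first step is to show, by a Rellich-type argument, that the assumed vanishing of $u^\infty(\cdot,d_0)$ on $\Sp^2_+$ forces $u^s(\cdot,d_0)\equiv0$ throughout $D_+\se\ov{B}$; once $u^s$ is known to vanish in the exterior of a large ball, this propagates to all of the connected set $D_+\se\ov{B}$ by unique continuation.

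Granting this, I would invoke the sound-soft condition on $B$: since $\ov{B}\subset D_+$ and $u^s(\cdot,d_0)$ vanishes near $\pa B$ from the side of $D_+\se\ov{B}$, the boundary condition $u^i+u^r+u^s=0$ on $\pa B$ reduces to $(u^i+u^r)(\cdot,d_0)=0$ on $\pa B$. Now $w:=(u^i+u^r)(\cdot,d_0)=e^{ikx\cdot d_0}-e^{ikx\cdot d_0'}$, with $d_0'=(d_{0,1},d_{0,2},-d_{0,3})$, is an entire solution of $\Delta w+k^2w=0$, so it solves the interior Dirichlet problem in $B$ with zero boundary data; since $k^2$ is not a Dirichlet eigenvalue of $-\Delta$ in $B$, we get $w\equiv0$ in $B$, and hence, by analyticity, $w\equiv0$ in $\R^3$. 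Thus $e^{ikx\cdot d_0}=e^{ikx\cdot d_0'}$ for all $x\in\R^3$, which forces $d_0=d_0'$ and therefore $d_{0,3}=0$, contradicting $d_0\in\Sp^2_-$. This contradiction yields the lemma.

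The routine ingredients (entireness of $u^i+u^r$, the eigenvalue argument, analytic continuation) are immediate; the step requiring real care is the Rellich assertion, because $u^s(\cdot,d_0)$ is a radiating solution only in the perturbed half-space and not in the exterior of a ball in $\R^3$, so the classical Rellich lemma cannot be quoted verbatim. I expect to dispose of it either by invoking the uniqueness part of the well-posedness theory for the direct locally rough surface problem in the cited literature, or directly by odd reflection: on the flat portion $\G_0\se\ov{B_R}$ one has $u^i+u^r=0$ (since $x\cdot d_0=x\cdot d_0'$ when $x_3=0$) and hence $u^s(\cdot,d_0)=0$, so reflecting $u^s(\cdot,d_0)$ oddly across that part of $\G_0$ produces a radiating solution of the Helmholtz equation in the exterior of a sufficiently large ball (with holes where $B$ and its mirror image sit), whose far-field pattern vanishes on all of $\Sp^2$ by the assumed vanishing on $\Sp^2_+$ together with the odd symmetry; the classical Rellich lemma and unique continuation then give $u^s(\cdot,d_0)\equiv0$ in $D_+\se\ov{B}$, completing the first step.
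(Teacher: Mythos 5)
Your proposal is correct and follows essentially the same route as the paper: assume $u^\infty(\cdot,d_0)\equiv0$ on $\Sp^2_+$, deduce $u^s(\cdot,d_0)\equiv0$ in $D_+\se\ov{B}$ by the half-space Rellich lemma (the paper simply cites \cite[Theorem 2.2]{W}, which is your first option), then use the sound-soft condition on $\pa B$ and the non-eigenvalue assumption to conclude $u^i+u^r\equiv0$ in $B$, a contradiction. Your explicit final step ($e^{ikx\cdot d_0}=e^{ikx\cdot d_0'}$ forcing $d_{0,3}=0$) and the odd-reflection fallback for the Rellich step are both sound and merely flesh out details the paper leaves implicit.
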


\begin{proof}
Suppose to the contrary that $u^\infty(\hat x,d_0)\equiv0$ for all $\hat{x}\in\Sp_+^2.$
Then, by Rellich's lemma (see \cite[Theorem 2.2]{W}) we have
\[u^s(x,d_0)=0\;\;\forall x\in D_+\se\ov{B}.\]
The Dirichlet boundary condition implies that
\[u(x,d_0):=u^i(x,d_0)+u^r(x,d_0)+u^s(x,d_0)=u^i(x,d_0)+u^r(x,d_0)=0\;\;\mbox{on}\;\;\G\cup\pa B.\]
Noting that $\Delta u+k^2u=0$ in $B$ and that $k^2$ is not a Dirichlet eigenvalue of $-\Delta$ in $B$,
we obtain that $u(x,d_0)\equiv0$ or $u^i(x,d_0)\equiv-u^r(x,d_0)$ for $x\in B$. This is a contradiction,
and the proof is thus complete.
\end{proof}

\begin{lemma}\label{extension}
Assume further that $v$ is a solution to the Helmholtz equation $\Delta v+k^2v=0$ in $D_+\se\ov{B}$ 
with the boundary condition $v=0$ on $\G\se\ov{B_R}$.
Let $\wid{v}$ be the extension of $v$ into $D_-\se(\ov{B_R\cup B^\prime})$ by reflection, that is,
\ben
\wid{v}(x)=\left\{\begin{array}{ll}
v(x), & x\in D_+\se\ov{B},\\
-v(x'), & x\in D_-\se(\ov{B_R\cup B^\prime}),
\end{array}\right.
\enn
where $D_-:=\{x\in\R^3:x_3<h(x_1,x_2)\}$, $B^\prime:=\{x\in\R^3:x'\in B\}$ and $x'=(x_1,x_2,-x_3)$.
Then $\wid{v}$ satisfies the Helmholtz equation in $\R^3\se(\ov{B_R\cup B\cup B^\prime})$.
\end{lemma}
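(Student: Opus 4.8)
The plan is to derive the statement from the classical reflection principle for the Helmholtz equation across a flat piece of boundary, after first making the geometry symmetric. I would begin with an elementary geometric observation: since $h\in C^2(\R^2)$ has compact support and the perturbation points $(x_1,x_2,h(x_1,x_2))$ with $h(x_1,x_2)\neq0$ all lie in $B_R$, the set where $h$ is nonzero projects into the open disc $\{x_1^2+x_2^2<R^2\}$; hence, whenever $x=(x_1,x_2,x_3)\notin\ov{B_R}$ one has $|h(x_1,x_2)|\le|x_3|$, with equality only if $h(x_1,x_2)=x_3=0$. It follows that $\G\se\ov{B_R}=\G_0\se\ov{B_R}$, that $D_+\se\ov{B_R}=\{x_3>0\}\se\ov{B_R}$ and $D_-\se\ov{B_R}=\{x_3<0\}\se\ov{B_R}$, and therefore that the map $x\mapsto x'=(x_1,x_2,-x_3)$ appearing in the definition of $\wid v$ is exactly the reflection across the plane $\G_0$. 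Since $B_R$ is centred at the origin it is invariant under this reflection, while $B^\prime$ is by definition the mirror image of $B$; so the whole configuration, restricted to the exterior of $\ov{B_R}$, is symmetric with respect to $\G_0$.

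Next I would verify the Helmholtz equation for $\wid v$ on the set $\Om:=\R^3\se\ov{B_R\cup B\cup B^\prime}$ by treating three cases. On $\Om\cap\{x_3>0\}$ the symmetry above gives $x\in D_+\se\ov B$, so $\wid v=v$ there and $\Delta\wid v+k^2\wid v=0$ by hypothesis. On $\Om\cap\{x_3<0\}$, the reflected point $x'$ lies in $\{x_3>0\}\se\ov{B_R}=D_+\se\ov{B_R}$ and, because $x\notin\ov{B^\prime}$, also $x'\notin\ov B$; hence $x'\in D_+\se\ov B$, where $v$ solves the Helmholtz equation, and since $x\mapsto x'$ is an isometry the function $\wid v(x)=-v(x')$ solves it near $x$ as well. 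It therefore remains to handle a neighbourhood of each point $x^0\in\Om\cap\G_0$: choosing $\delta>0$ so that $B_\delta(x^0)\subset\Om$, and writing $B_\delta^\pm:=B_\delta(x^0)\cap\{\pm x_3>0\}$ and $\Sigma:=B_\delta(x^0)\cap\G_0$, I note that $\Sigma\subset\G_0\se\ov{B_R}=\G\se\ov{B_R}$, so the boundary condition forces $v=0$, and hence $\wid v=0$, on $\Sigma$; moreover $B_\delta^+\subset D_+\se\ov B$ and $B_\delta^-$ is carried by the reflection into $B_\delta^+$.

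For the gluing I would invoke standard elliptic boundary regularity: near $x^0$ the function $v$ solves $\Delta v+k^2v=0$ in $B_\delta^+$ with homogeneous Dirichlet data on the flat, hence $C^\infty$, portion $\Sigma$, so $v$ (and therefore $\wid v$ on $\ov{B_{\delta/2}^+}$) is smooth up to $\Sigma$; by the odd reflection the same is true of $\wid v$ on $\ov{B_{\delta/2}^-}$. Across $\Sigma$ the traces of $\wid v$ agree (both are $0$), the tangential derivatives $\pa_1\wid v,\pa_2\wid v$ agree (they vanish on $\Sigma$), and the normal derivatives agree because $\pa_3\wid v(x_1,x_2,x_3)=\pa_3v(x_1,x_2,-x_3)$ for $x_3<0$; thus $\wid v\in C^1(B_{\delta/2}(x^0))$. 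A short integration by parts then shows that $\wid v$ is a weak solution of $\Delta\wid v+k^2\wid v=0$ in $B_{\delta/2}(x^0)$ — the only interface contribution, proportional to the jump of $\pa_3\wid v$ across $\Sigma$, vanishes — and interior regularity for this constant-coefficient equation upgrades $\wid v$ to a classical solution there. Combining the three cases gives $\Delta\wid v+k^2\wid v=0$ in all of $\R^3\se(\ov{B_R\cup B\cup B^\prime})$.

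I expect the main obstacle to be the regularity bookkeeping in this last step: one must pin down in what sense $v$ attains the boundary value $0$ on the flat part of $\G$ and argue that this suffices to reflect (at least $C^1$ up to $\Sigma$). The geometric reduction and the reflection principle itself are routine, but getting the boundary regularity cleanly — or, in the intended application, simply quoting that the relevant $v$ is already smooth up to $\G_0\se\ov{B_R}$ — is where the care is needed.
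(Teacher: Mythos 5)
Your proposal is correct and follows essentially the same route as the paper, which simply cites the reflection principle (\cite[Theorem 2.18 (a)]{KG}) together with a boundary regularity result (\cite[Theorem 3.1]{W}); you have written out in full the standard argument behind those citations, including the correct geometric reduction $\G\se\ov{B_R}=\G_0\se\ov{B_R}$ and the $C^1$-gluing across the flat interface. The regularity caveat you flag at the end is exactly the point the paper handles by invoking \cite[Theorem 3.1]{W}, so nothing is missing.
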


\begin{proof}
Note that the local perturbation $\G_p\in B_R$ and $\ov{B}\cap\ov{B_R}=\emptyset$. 
By a regularity argument (see \cite[Theorem 3.1]{W}) and the reflection principle, we know that
$\wid{v}\in C^2(\R^3\se(\ov{B_R\cup B\cup B^\prime})$ and satisfies the Helmholtz equation in
$\R^3\se(\ov{B_R\cup B\cup B^\prime})$ (cf. \cite[Theorem 2.18 (a)]{KG}).
The proof is complete.
\end{proof}

\begin{lemma}\label{reci} 
For any $d\in\Sp_-$, let $u^\infty(\hat x,d)$ denote the far-field pattern corresponding to $\{\G,B\}$.
Then we have the reciprocity relation
\be\label{reci-far}
u^\infty(\hat x,d)=u^\infty(-d,-\hat x)\;\;\;\forall\hat{x}\in\Sp_+^2,\;d\in\Sp_-^2.
\en
\end{lemma}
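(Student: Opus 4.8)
The plan is to establish the reciprocity relation \eqref{reci-far} by the classical argument based on Green's second identity applied to the two total fields, adapted to the locally rough surface setting with a reference ball. Fix $\hat x\in\Sp^2_+$ and $d\in\Sp^2_-$, and write $-\hat x\in\Sp^2_-$ so that $u^\infty(-d,-\hat x)$ is well-defined. Denote by $u(\cdot,d)=u^i(\cdot,d)+u^r(\cdot,d)+u^s(\cdot,d)$ the total field associated with $\{\G,B\}$ and incident direction $d$, and similarly $u(\cdot,-\hat x)$ for incident direction $-\hat x$. Both total fields satisfy the Helmholtz equation in $D_+\se\ov B$, the Dirichlet condition on $\G\cup\pa B$, and the scattered parts satisfy the Sommerfeld radiation condition. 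The first step is to integrate $u(\cdot,d)\,\Delta u(\cdot,-\hat x)-u(\cdot,-\hat x)\,\Delta u(\cdot,d)\equiv 0$ over the truncated domain $(D_+\se\ov B)\cap B_{R'}$ for $R'$ large, obtaining via Green's second identity a sum of boundary integrals over $\G_{R'}:=\G\cap B_{R'}$, over $\pa B$, and over the outer hemisphere-type cap $\pa B_{R'}\cap D_+$. On $\G\cup\pa B$ both total fields vanish, so those contributions drop out, leaving only the far cap term, which in the limit $R'\to\infty$ must therefore vanish.

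Next I would decompose each total field into its known part $u^i+u^r$ (the free-space field over the unperturbed plane, which itself satisfies the Helmholtz equation and the Dirichlet condition on $\G_0$) and the scattered part $u^s$. The analysis splits into three types of pairings on the far cap: (i) the $u^s$–$u^s$ pairing, which vanishes as $R'\to\infty$ because both scattered fields are radiating (a standard consequence of the Sommerfeld condition, exactly as in the bounded-obstacle proof of reciprocity in \cite[Theorem 3.13]{CK}); (ii) the $(u^i+u^r)$–$(u^i+u^r)$ pairing, where one applies Green's identity in the lower half-space as well, using that $u^i(\cdot,d)+u^r(\cdot,d)$ vanishes on $\G_0$, so these terms cancel; and (iii) the mixed pairings of $u^s$ against $u^i+u^r$, which by the far-field asymptotics of $u^s$ and a stationary-phase / Jones' lemma computation reduce precisely to (a constant multiple of) $u^\infty(\hat x,d)$ and $u^\infty(-d,-\hat x)$, respectively — this is where the reciprocity identity is produced. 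Here I would mirror the computation in \cite[Theorem 3.13]{CK}, taking care that the incident field for the ``$-\hat x$'' problem, evaluated in the far-field direction associated with the ``$d$'' problem, produces the correct arguments; the presence of the reflected wave $u^r$ is handled by the half-space Green's identity and contributes nothing extra beyond making $u^i+u^r$ the correct ``incident'' object satisfying the homogeneous Dirichlet condition on the flat part.

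The main obstacle will be controlling the boundary integrals at infinity rigorously in this unbounded geometry: unlike the bounded-obstacle case, the surface $\G$ extends to infinity (as the flat plane $\G_0$ outside $B_R$), so the far cap $\pa B_{R'}\cap D_+$ is a hemisphere rather than a full sphere, and one must verify that no spurious boundary contribution arises along the ``equator'' where $\pa B_{R'}$ meets $\G_0$, and that the $u^i+u^r$ field — which does not decay — still yields cancellation via the lower-half-space argument. I expect this to be manageable by invoking the mapping and regularity properties of the scattering problem established in \cite{W} (in particular the uniform decay of $u^s$ and its radial derivative claimed there), together with the reflection trick already used in Lemma~\ref{extension}: extending $v:=u(\cdot,d)$ and $u(\cdot,-\hat x)$ by odd reflection across $\G_0$ outside $B_R$ effectively reduces the far-field analysis to the full-sphere situation of the classical obstacle case, at which point the argument of \cite[Theorem 3.13]{CK} applies essentially verbatim and yields \eqref{reci-far}.
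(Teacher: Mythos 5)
Your proposal follows essentially the same route as the paper's proof: the classical reciprocity argument via Green's second identity applied separately to the $(u^i+u^r)$--$(u^i+u^r)$, $u^s$--$u^s$, and mixed pairings (the mixed ones yielding the two far-field patterns through the standard representation), combined with the odd reflection across $\G_0$ from Lemma~\ref{extension} to reduce the hemispherical far cap to the full-sphere situation and the Dirichlet conditions on $\G\cup\pa B$ to eliminate the remaining boundary terms. The only difference is the order of assembly (you start from Green's identity in the truncated domain and decompose afterwards, while the paper combines the individual identities first), which is cosmetic.
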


\begin{proof}
Choose $R'>R$ large enough such that $\ov{B_R\cup B}\subset B_{R'}$.
Extend the total field $u(x,d)=u^i(x,d)+u^r(x,d)+u^s(x,d)$ into $D_-\se\ov{B_{R'}}$ by reflection
as in Lemma \ref{extension}, which is an odd function and denoted again by $u(x,d)$.
Apply Green's second formula to $(u^i+u^r)(y,d)$ and $(u^i+u^r)(y,-\hat{x})$ in $B_{R'}$ to get
\ben
0&=&\int_{\pa B_{R'}}\left[(u^i+u^r)(y,d)\frac{\pa(u^i+u^r)(y,-\hat{x})}{\pa\nu}\right.\\
&&\qquad\qquad\left.-(u^i+u^r)(y,-\hat{x})\frac{\pa(u^i+u^r)(y,d)}{\pa\nu}\right]ds(y).\\ \label{reci-1}
\enn
On the other hand, applying Green's second formula to $u^s(y,d)$ and $u^s(y,-\hat{x})$ in $\R^3\se\ov{B_{R'}}$
and making use of the Sommerfeld radiation condition for the scattered field $u^s$ we obtain that
\be\label{reci-2}
0=\int_{\pa B_{R'}}\left[u^s(y,d)\frac{\pa u^s(y,-\hat{x})}{\pa\nu}
-u^s(y,-\hat{x})\frac{\pa u^s(y,d)}{\pa\nu}\right]ds(y).
\en
Recalling the definition of $u^i$ and $u^r$ it follows from \cite[Theorem 2.6]{CK} that
\ben
4\pi u^\infty(\hat{x},d)&=&\int_{\pa B_{R'}}\left[u^s(y,d)\frac{\partial u^i}{\partial\nu}(y,-\hat x)
-u^i(y,-\hat x)\frac{\partial u^s}{\partial\nu}(y,d)\right]ds(y)\\
&=&\int_{\pa B_{R'}}\left[u^s(y,d)\frac{\partial u^r}{\partial\nu}(y,-\hat x)
-u^r(y,-\hat x)\frac{\partial u^s}{\partial\nu}(y,d)\right]ds(y),
\enn
which implies that
\ben
4\pi u^\infty(\hat{x},d)&=&\frac12\int_{\pa B_{R'}}\left[u^s(y,d)\frac{\pa(u^i+u^r)}{\pa\nu}(y,-\hat x)\right.\\
&&\qquad\qquad\left.-(u^i+u^r)(y,-\hat x)\frac{\pa u^s}{\pa\nu}(y,d)\right]ds(y).\\ \label{reci-3}
\enn
Interchanging the role of $\hat x$ and $d$ in the above equation gives
\ben
4\pi u^\infty(-d,-\hat x)&=&\frac12\int_{\pa B_{R'}}\left[u^s(y,-\hat x)\frac{\pa(u^i+u^r)}{\pa\nu}(y,d)\right.\\
&&\qquad\qquad\left.-(u^i+u^r)(y,d)\frac{\pa u^s}{\pa\nu}(y,-\hat x)\right]ds(y).\label{reci-4}
\enn
Combining (\ref{reci-1})-(\ref{reci-4}) leads to the result
\ben
4\pi[u^\infty(\hat x,d)-u^\infty(-d,-\hat x)]
&=&\frac12\int_{\pa B_{R'}}\left[u(y,d)\frac{\pa u}{\pa\nu}(y,-\hat{x})\right.\\
&&\left.\qquad\qquad-u(y,-\hat x)\frac{\pa u}{\pa\nu}(y,d)\right]ds(y).
\enn
Since $u$ is odd, the integrand of the integral term on the right-hand side of the above equation is even
and so,
\ben
4\pi[u^\infty(\hat x,d)-u^\infty(-d,-\hat x)]&=&\int_{\pa B_{R'}^+}\left[u(y,d)\frac{\pa u}{\pa\nu}(y,-\hat{x})
-u(y,-\hat x)\frac{\pa u}{\pa\nu}(y,d)\right]ds(y)\\
&=&\int_{\pa B\cup\G_{R'}}\left[u(y,d)\frac{\pa u}{\pa\nu}(y,-\hat{x})
-u(y,-\hat x)\frac{\pa u}{\pa\nu}(y,d)\right]ds(y),
\enn
where $\pa B_{R'}^+:=\{x\in \pa B_{R'}:x_3>0\}$, $\G_{R'}:=\G\cap B_{R'}$ and Green's second formula is applied to
$u(y,d)$ and $u(y,-\hat{x})$ in $D_+\cap{B_{R'}}$ to obtain the last equality.
The required equation (\ref{reci-far}) follows from the above equation on using the boundary condition
$u(y,d)=u(y,-\hat x)=0$ for $y\in\G\cup\pa B$. The lemma is thus proved.
\end{proof}

\subsection{Proof of Theorem \ref{lt}}

By making use of Lemmas \ref{NotEquiv0} and \ref{reci} and arguing similarly as in the proof of (4.9)
and (4.10) in \cite{XZZ}, it can be obtained from (\ref{r=}) and (\ref{m=}) that
\be\label{case1}
u_1^\infty(\hat x,d)=e^{i\alpha}u_2^\infty(\hat x,d)\;\;\forall\hat x\in\Sp_+^2,\;\;d\in\Sp_-^2
\en
or
\be\label{case2}
u_1^\infty(\hat x,d)=e^{i\beta}\ov{u_2^\infty(\hat x,d)}\;\;\forall\hat x\in\Sp_+^2,\;\;d\in\Sp_-^2,
\en
where $\alpha$ and $\beta$ are real constants independent of $\hat x$ and $d$.

We now prove that (\ref{case2}) dose not hold. To this end, we consider the extension $\wid{u}_j^s$ of $u_j^s$ 
defined by Lemma \ref{extension}, $j=1,2$. For arbitrarily fixed $d\in\Sp^2_-$, by Green's representation 
for the scattering solution $\wid{u}_2^s$ in $\R^3\se\ov{B_R\cup B\cup B'}$ (see \cite[Theorem 2.5]{CK}) 
we obtain that 
\ben
\wid{u}_2^s(x,d)=\int_{\pa B_R\cup\pa B\cup\pa B'}\left[\wid{u}_2^s(y,d)\frac{\pa\Phi(x,y)}{\pa\nu(y)}
-\frac{\pa\wid{u}_2^s(y,d)}{\pa\nu(y)}\Phi(x,y)\right]ds(y)\\
\qquad\forall x\in\R^3\se\ov{B_R\cup B\cup B'}.
\enn
Thus the far-field pattern ${u}^\infty_2$ of $u_2^s$ is given by
\ben
{u}_2^\infty(\hat{x},d)=\wid{u}_2^\infty(\hat{x},d)=\frac1{4\pi}\int_{\pa B_R\cup\pa B\cup\pa B'}
\left[\wid{u}_2^s(y,d)\frac{\pa e^{-ik\hat{x}\cdot y}}{\pa\nu}
-\frac{\pa\wid{u}_2^s}{\pa\nu}(y,d)e^{-ik\hat x\cdot y}\right]ds(y)\\
\quad\forall\hat{x}\in\Sp^2_+.
\enn
From (\ref{case2}) it follows that
\ben
u_1^\infty(\hat{x},d)&=&e^{i\beta}\ov{{u}_2^\infty(\hat{x},d)}\\
&=&\frac{e^{i\beta}}{4\pi}\int_{\pa B_R\cup\pa B\cup\pa B'}
\left[\ov{\wid{u}_2^s}(y,d)\frac{\pa e^{ik\hat x\cdot y}}{\pa\nu}
-\frac{\pa\ov{\wid{u}_2^s}}{\pa\nu}(y,d)e^{ik\hat x\cdot y}\right]ds(y)\\
&=&\frac{e^{i\beta}}{4\pi}\int_{\pa B_R\cup\pa\wid{B}\cup\pa\wid{B}'}
\left[\ov{\wid{u}_2^s}(-y,d)\frac{\pa e^{-ik\hat x\cdot y}}{\pa\nu}
-\frac{\pa\ov{\wid{u}_2^s}}{\pa\nu}(-y,d)e^{-ik\hat x\cdot y}\right]ds(y)
\enn
where $\wid{B}':=\{x\in\R^3:-x\in B'\}$. By Rellich's lemma (see \cite[Theorem 2.2]{W}) we have
\ben
{u}_1^s(x,d)=e^{i\beta}\int_{\pa B_R\cup\pa\wid{B}\cup\pa\wid{B}'}\left[\ov{\wid{u}_2^s}(-y,d)
\frac{\pa\Phi(x,y)}{\pa\nu(y)}-\frac{\pa\ov{\wid{u}_2^s}}{\pa\nu}(-y,d)\Phi(x,y)\right]ds(y)\\
\forall x\in\R^3_+\se\ov{B_R\cup\wid{B}\cup\wid{B}'}.
\enn
This means that ${u}_1^s(\cdot,d)$ satisfies the Helmholtz equation in $\R^3_+\se\ov{\wid{B}\cup\wid{B}'}$ 
and is analytic in $\R^3_+\se\ov{\wid{B}\cup\wid{B}'}$. 
On the other hand, ${u}_1^s(\cdot,d)$ satisfies the Helmholtz equation in $\R^3_+\se\ov{B\cup B_R}$. 
Since $\ov{B}\cap(\ov{B_R}\cup\{(0,0,x_3):x_3\in\R\})=\emptyset$, $\ov{({B}\cup B_R)\cap\wid{B}'}=\emptyset$. 
Therefore, ${u}_1^s(\cdot,d)$ satisfies the Helmholtz equation in $\R^3_+\se\ov{B_R}$. 
Thus the total field $u_1=u^i+u^r+u_1^s$ satisfies that
\[\begin{cases}
\Delta u_1+k^2u_1=0 & \text{in}\;\; B,\\
u_1=0 & \text{on}\;\;\pa B.
\end{cases}\]
Since $k^2$ is not a Dirichlet eigenvalue of $-\Delta$ in $B$, we have $u_1\equiv 0$ in $B$, which,
together with the analyticity of the total field $u_1$ in $\R^3_+\se\ov{B_R}$,
implies that $u_1\equiv0$ in $\R^3_+\se\ov{B_R}$. This is a contradiction,
leading to the fact that (\ref{case2}) is impossible. Thus, (\ref{case1}) is true.

Now, by (\ref{case1}) and Rellich's lemma (see \cite[Theorem 2.2]{W}) we have
\be\label{case1+1}
u_1^s(x,d)=e^{i\alpha}u_2^s(x,d)\;\;\forall x\in\R^3_+\se\ov{B\cup B_R},\;\;d\in\Sp_-^2.
\en
From the boundary condition on $\pa B$ it follows that $u_j^s(x,d)=-[u^i(x,d)+u^r(x,d)]$
for all $x\in\pa B,\;d\in\Sp_-^2$, $j=1,2.$ This, together with (\ref{case1+1}), yields
that $e^{i\alpha}=1$, leading to the fact that
\be\label{case1+2}
u_1^\infty(\hat x,d)=u_2^\infty(\hat x,d)\;\;\forall\hat x\in\Sp_+^2,\;\;d\in\Sp_-^2.
\en
By (\ref{case1+2}) and Theorem 4.1 in \cite{ZZ13} it follows that $\G_1=\G_2$.
This completes the proof of Theorem \ref{lt}.

Similarly, we can prove the following theorem for the Neumann boundary condition case.

\begin{theorem}\label{thm4-n}
Assume that $\G_j$, $j=1,2,$ are two sound-hard, locally rough surfaces.
Assume that $\ov{B}\cap(\ov{B_R}\cup\{(0,0,x_3):x_3\in\R\})=\emptyset$.
If the corresponding far-field patterns satisfy that
\ben
|u_1^\infty(\hat{x};d)|&=&|u_2^\infty(\hat{x};d)|\quad\forall\hat{x}\in\Sp^2_+,\;d\in\Sp^2_-,\\
|u_1^\infty(\hat{x};d,d_0)|&=&|u_2^\infty(\hat{x};d,d_0)|\quad\forall\hat{x}\in\Sp^2_+,\;d\in\Sp^2_-
\enn
for an arbitrarily fixed $d_0\in\Sp^2_-$, then $\G_1=\G_2$.
\end{theorem}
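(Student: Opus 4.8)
The plan is to mirror the proof of Theorem \ref{lt} line by line, replacing the Dirichlet ingredients with their Neumann counterparts. First I would record the analogues of the three lemmas in the sound-hard setting: Lemma \ref{NotEquiv0} becomes the statement that $u^\infty(\cdot,d_0)\not\equiv0$ in $\Sp^2_+$, proved by Rellich's lemma plus the fact that $k^2$ is not a Dirichlet eigenvalue of $-\Delta$ in $B$ (note that the auxiliary ball $B$ is still sound-soft, so this step is unchanged — only the surface $\G$ carries the Neumann condition); Lemma \ref{extension} becomes the even reflection $\wid v(x)=v(x)$ for $x\in D_+\se\ov B$ and $\wid v(x)=+v(x')$ for $x\in D_-\se(\ov{B_R\cup B'})$, which again satisfies the Helmholtz equation across $\G_0\se\ov{B_R}$ because $\pa v/\pa\nu=0$ there, by the reflection principle for the Neumann problem; and Lemma \ref{reci} becomes the reciprocity relation $u^\infty(\hat x,d)=u^\infty(-d,-\hat x)$, proved exactly as before except that $u^r$ now carries a plus sign and the reflected/extended total field is \emph{even} rather than odd, so the integrand over $\pa B_{R'}$ is again even and the argument closes identically.

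Next, armed with these Neumann lemmas, I would run the dichotomy argument from \cite{XZZ}: from the phaseless identities in the hypothesis one obtains that either $u_1^\infty(\hat x,d)=e^{i\alpha}u_2^\infty(\hat x,d)$ for all $\hat x\in\Sp^2_+$, $d\in\Sp^2_-$, or $u_1^\infty(\hat x,d)=e^{i\beta}\ov{u_2^\infty(\hat x,d)}$ for all such $\hat x,d$, with $\alpha,\beta$ real constants. Then I would rule out the conjugate alternative exactly as in the proof of Theorem \ref{lt}: Green's representation formula for $\wid u_2^s$ in $\R^3\se\ov{B_R\cup B\cup B'}$ together with the far-field formula and the conjugate relation force, via Rellich's lemma, the identity
\[
u_1^s(x,d)=e^{i\beta}\int_{\pa B_R\cup\pa\wid B\cup\pa\wid B'}\left[\ov{\wid u_2^s}(-y,d)\frac{\pa\Phi(x,y)}{\pa\nu(y)}-\frac{\pa\ov{\wid u_2^s}}{\pa\nu}(-y,d)\Phi(x,y)\right]ds(y)
\]
for $x\in\R^3_+\se\ov{B_R\cup\wid B\cup\wid B'}$, so that $u_1^s(\cdot,d)$ extends analytically as a Helmholtz solution across $B$; since $\ov B\cap(\ov{B_R}\cup\{(0,0,x_3):x_3\in\R\})=\emptyset$ the geometry guarantees $B$ lies in the common region of validity, whence $u_1=u^i+u^r+u_1^s$ solves $\Delta u_1+k^2u_1=0$ in $B$ with $u_1=0$ on $\pa B$; since $k^2$ is not a Dirichlet eigenvalue of $-\Delta$ in $B$ this gives $u_1\equiv0$ in $B$ and then, by analytic continuation, $u_1\equiv0$ in $\R^3_+\se\ov{B_R}$ — a contradiction. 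Hence the first (phase-factor) alternative holds.

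Finally, from $u_1^\infty=e^{i\alpha}u_2^\infty$ on $\Sp^2_+\times\Sp^2_-$ and Rellich's lemma I would deduce $u_1^s(x,d)=e^{i\alpha}u_2^s(x,d)$ in $\R^3_+\se\ov{B\cup B_R}$; evaluating on $\pa B$, where both total fields vanish so that $u_j^s(x,d)=-[u^i(x,d)+u^r(x,d)]$, forces $e^{i\alpha}=1$, hence $u_1^\infty(\hat x,d)=u_2^\infty(\hat x,d)$ for all $\hat x\in\Sp^2_+$, $d\in\Sp^2_-$; invoking the known uniqueness result for the sound-hard locally rough surface problem with phased far-field data (the Neumann analogue of \cite[Theorem 4.1]{ZZ13}, e.g.\ from \cite{QZZ}) then yields $\G_1=\G_2$. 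The one place that genuinely needs care — the main obstacle — is the reflection lemma: verifying that the even extension of a Neumann solution across the flat part $\G_0\se\ov{B_R}$ is $C^2$ and solves the Helmholtz equation there, and simultaneously checking that the reference-ball geometry $\ov B\cap(\ov{B_R}\cup\{(0,0,x_3):x_3\in\R\})=\emptyset$ still separates $B$, $\wid B$, $\wid B'$ and $B_R$ so that the analytic-continuation chain in the $e^{i\beta}$ step goes through unchanged; everything else is a verbatim transcription of the Dirichlet proof with the sign of $u^r$ flipped and "odd" replaced by "even".
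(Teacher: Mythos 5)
Your proposal is correct and follows essentially the same route the paper intends: the paper proves Theorem \ref{thm4-n} only by the remark ``similarly,'' and your transcription — sound-soft reference ball unchanged, even reflection in place of odd, sign flip in $u^r$, parity argument in the reciprocity lemma, and the Neumann-case phased-data uniqueness result from \cite{QZZ} replacing \cite[Theorem 4.1]{ZZ13} — is exactly the intended adaptation. You also correctly flag the only points needing verification (the Neumann reflection principle across $\G_0\se\ov{B_R}$ and the separation of $B$, $\wid B$, $\wid B'$, $B_R$ guaranteed by $\ov B\cap(\ov{B_R}\cup\{(0,0,x_3):x_3\in\R\})=\emptyset$), so nothing further is required.
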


\begin{remark}\label{r3} {\rm
Theorems \ref{lt} and \ref{thm4-n} also hold in the two-dimensional case, and the proof is similar.
}
\end{remark}

\section{Conclusion}\label{con}

In this paper, we have proved uniqueness results for inverse obstacle and medium scattering with
phaseless far-field data generated by infinitely many sets of superpositions of two plane waves
with different incident directions at a fixed frequency, without knowing the physical property of the obstacle
and the inhomogeneous medium in advance. Our method is based on putting a known, reference ball into the
scattering system in conjunction with a simple technique of using Green's representation formula
for the scattering solutions and Rellich's lemma. These results improved our previous results
in \cite{XZZ}, where similar uniqueness results have been obtained under certain a priori assumptions
on the physical property of the obstacle and the inhomogeneous medium.
We have further used our method to prove uniqueness in determining a sound-soft or sound-hard
locally rough surface from the phaseless far-field patterns corresponding to infinitely many sets
of superpositions of two plane waves with different incident directions at a fixed frequency.
Our results are also valid in the two-dimensional case.
It is interesting to prove the same uniqueness results without the reference ball.
For the locally rough surface case, our uniqueness results require to know the property of the surface,
and our analysis relies on the reflection principle. Thus, it is interesting to establish similar results
for the impedance and penetrable surface cases.

\section*{Acknowledgements}

This work is partly supported by the NNSF of China grants 91630309, 11501558 and 11571355
and the National Center for Mathematics and Interdisciplinary Sciences, CAS.


\begin{thebibliography}{99}

\bibitem{ACZ16} H. Ammari, Y.T. Chow and J. Zou, Phased and phaseless domain reconstructions in the invere
scattering problem via scattering coeffieicents, {\em SIAM J. Appl. Math. \bf76} (2016), 1000-1030.

\bibitem{BaoLiLv2012} G. Bao, P. Li and J. Lv, Numerical solution of an inverse diffraction grating problem
from phaseless data, {\em J. Opt. Soc. Am. A \bf30} (2013), 293-299.

\bibitem{BaoLin2011}G. Bao and J. Lin, Imaging of local surface displacement on an infinite
ground plane: the multiple frequency case, {\em SIAM J. Appl. Math. \bf71} (2011), 1733-1752.

\bibitem{BaoZhang16} G. Bao and L. Zhang, Shape reconstruction of the multi-scale rough surface from
multi-frequency phaseless data, {\em Inverse Problems \bf32} (2016), 085002 (16pp).

\bibitem{Blasten}E. Bl{\aa}sten, The inverse problem of the Schr\"odinger equation in the plane,
Licentiate Thesis, University of Helsinki, 2011 or arXiv:1103.6200v1, 2011.

\bibitem{B} A.L. Bukhgeim, Recovering a potential from Cauchy data in the two dimensional case,
{\em J. Inverse Ill-Posed Problems \bf16} (2008), 19-33.

\bibitem{CC14} F. Cakoni and D. Colton,
{\em A Qualitative Approach to Inverse Scattering Theory}, Springer, New York, 2014.

\bibitem{CH16} Z. Chen and G. Huang, A direct imaging method for electromagnetic scattering data
without phase information, {\em SIAM J. Imaging Sci. \bf9} (2017), 1273-1297.

\bibitem{CH17} Z. Chen and G. Huang, Phaseless imaging by reverse time migration: Acoustic waves,
{\em Numer. Math. Theor. Meth. Appl. \bf10} (2017), 1-21.

\bibitem{CFH17} Z. Chen, S. Fang and G. Huang, A direct imaging method for the half-space inverse scattering
problem with phaseless data, {\em Inverse Probl. Imaging \bf11} (2017), 901-916.

\bibitem{CK} D. Colton and R. Kress, {\em Inverse Acoustic and Electromagnetic Scattering Theory}
(3rd Ed), Springer, New York, 2013.

\bibitem{CK11} D. Colton and R. Kress, Inverse Scattering, in: {\em Handbook of Mathematical Methods
in Imaging} (ed. O. Scherzer), Springer, New York, pp. 551-598, 2011.

\bibitem{I2007} O. Ivanyshyn, Shape reconstruction of acoustic obstacles from the modulus of
the far field pattern, {\em Inverse Probl. Imaging \bf1} (2007), 609-622.

\bibitem{IK2010} O. Ivanyshyn and R. Kress, Identification of sound-soft 3D obstacles from
phaseless data, {\em Inverse Probl. Imaging \bf4} (2010), 131-149.

\bibitem{IK2011} O. Ivanyshyn and R. Kress, Inverse scattering for surface impedance from
phaseless far field data, {\em J. Comput. Phys. \bf230} (2011), 3443-3452.

\bibitem{Kirsch} A. Kirsch, {\em An Introduction to the Mathematical Theory of Inverse Problems}
(2nd Ed), Springer, New York, 2011.

\bibitem{KG} A. Kirsch and N. Grinberg, {\em The Factorization Methods for Inverse Problems},
Oxford Univ. Press, Oxford, 2008.

\bibitem{K14} M.V. Klibanov, Phaseless inverse scattering problems in three dimensions,
{\em SIAM J. Appl. Math. \bf74} (2014), 392-410.

\bibitem{K17} M.V. Klibanov, A phaseless inverse scattering problem for the 3-D Helmholtz equation,
{\em Inverse Probl. Imaging \bf11} (2017), 263-276.

\bibitem{KR16} M.V. Klibanov and V.G. Romanov,
Reconstruction procedures for two inverse scattering problems without the phase information,
{\em SIAM J. Appl. Math. \bf76} (2016), 178-196.

\bibitem{KR17} M.V. Klibanov and V.G. Romanov, Uniqueness of a 3-D coefficient inverse scattering problem
without the phase information, {\em Inverse Problems \bf33} (2017) 095007.

\bibitem{KR} R. Kress and W. Rundell, Inverse obstacle scattering with modulus of the far field pattern
as data, in: {\em Inverse Problems in Medical Imaging and Nondestructive Testing}, Springer, Wien, 1997,
pp. 75-92.

\bibitem{L16} K.M. Lee, Shape reconstructions from phaseless data,
{\em Enging. Anal. Boundary Elements \bf71} (2016), 174-178.

\bibitem{LiLiu15} J. Li and H. Liu, Recovering a polyhedral obstacle by a few backscattering measurements,
{\em J. Differential Equat. \bf259} (2015), 2101-2120.

\bibitem{LLW17} J. Li, H. Liu and Y. Wang, Recovering an electromagnetic obstacle by a few phaseless
backscattering measurements, {\em Inverse Problems \bf33} (2017) 035001.

\bibitem{LLZ09} J. Li, H. Liu and J. Zou, Strengthened linear sampling method with a reference ball,
{\em SIAM J. Sci. Comput. \bf31} (2009), 4013-4040.

\bibitem{LS04} J. Liu and J. Seo, On stability for a translated obstacle with impedance boundary condition,
{\em Nonlinear Anal. \bf59} (2004), 731-744.

\bibitem{LZ10} X. Liu and B. Zhang, Unique determination of a sound-soft ball by the modulus of a single
far field datum, {\em J. Math. Anal. Appl. \bf365} (2010), 619-624.

\bibitem{majda76} A. Majda, High frequency asymptotics for the scattering matrix and the inverse problem of
acoustical scattering, {\em Commun. Pure Appl. Math. \bf29} (1976), 261-291.

\bibitem{MH17} S. Maretzke and T. Hohage, Stability estimates for linearized near-field phase retrieval
in X-ray phase contrast imaging, {\em SIAM J. Appl. Math. \bf77} (2017), 384-408.

\bibitem{M} W. McLean, {\em Strongly Elliptic Systems and Boundary Integral Equations},
Cambridge Univ. Press, Cambridge, 2000.

\bibitem{N15} R.G. Novikov, Formulas for phase recovering from phaseless scattering data at fixed frequency,
{\em Bull. Sci. Math. \bf139} (2015), 923-936.

\bibitem{N16} R.G. Novikov, Explicit formulas and global uniqueness for phaseless inverse scattering
in multidimensions, {\em J. Geom. Anal. \bf26} (2016), 346-359.

\bibitem{QZZ} F. Qu, B. Zhang and H. Zhang, A novel integral equation for scattering by locally rough
surfaces and application to the inverse problem: the Neumann case, Preprint, 2018.

\bibitem{S16} J. Shin, Inverse obstacle backscattering problems with phaseless data,
{\em Euro. J. Appl. Math. \bf27} (2016), 111-130.

\bibitem{W} A. Willers, The Helmholtz equation in disturbed half-spaces,
{\em Math. Methods Appl. Sci. \bf9} (1987), 312-323.

\bibitem{XZZ} X. Xu, B. Zhang and H. Zhang, Uniqueness in inverse scattering problems with phaseless 
far-field data at a fixed frequency, {\em SIAM J. Appl. Math. \bf78} (2018), 1737-1753, arXiv:1709.07878.

\bibitem{ZZ01} B. Zhang and H. Zhang, Recovering scattering obstacles by multi-frequency phaseless
far-field data, {\em J. Comput. Phys. \bf345} (2017), 58-73.

\bibitem{ZZ02} B. Zhang and H. Zhang, Imaging of locally rough surfaces from intensity-only far-field
or near-field data, {\em Inverse Problems \bf33} (2017) 055001 (28pp).

\bibitem{ZZ03} B. Zhang and H. Zhang, Fast imaging of scattering obstacles from phaseless far-field
measurements at a fixed frequency, arXiv:1805.09046v1, 2018.

\bibitem{ZG18} D. Zhang and Y. Guo, Uniqueness results on phaseless inverse scattering with a reference ball, 
{\em Inverse Problems \bf34} (2018) 085002 (12pp).

\bibitem{ZZ13} H. Zhang and B. Zhang, A novel integral equation for scattering by locally rough surfaces and application to the inverse problem, {\em SIAM J. Appl. Math. \bf73} (2013), 1811-1829.

\end{thebibliography}
\end{document}